\theoremstyle{plain}
\newtheorem{thm}{Theorem}[section]
\newtheorem{conjecture}[thm]{Conjecture}
\newtheorem{cor}[thm]{Corollary}
\newtheorem{conj}[thm]{Conjecture}
\newtheorem{lemma}[thm]{Lemma}
\newtheorem{lem}[thm]{Lemma}
\newtheorem{prop}[thm]{Proposition}
\theoremstyle{definition}
\newtheorem{nota}[thm]{Notation}
\numberwithin{equation}{section}
\newcommand{\alb}{\mathrm{alb}}
\newcommand{\Alb}{\mathrm{Alb}}
\newcommand{\CH}{\mathrm{CH}}
\newcommand{\id}{\mathrm{id}}
\newcommand{\Pic}{\mathrm{Pic}}
\newcommand{\Aut}{\mathrm{Aut}}
\newcommand{\CHM}{\mathrm{\CHM}}
\newcommand{\CC}{\mathbb{C}}
\newcommand{\QQ}{\mathbb{Q}}
\newcommand{\ZZ}{\mathbb{Z}}
\newcommand{\PP}{\mathbb{P}}
\newcommand{\sB}{\mathcal{B}}
\newcommand{\sF}{\mathcal{F}}
\newcommand{\sH}{\mathcal{H}}
\newcommand{\sM}{\mathcal{M}}
\newcommand{\sO}{\mathcal{O}}
\newcommand{\tB}{\widetilde{B}}
\begin{document}
	\title[On symplectic automorphisms of a surface with genus two fibration]{On symplectic automorphisms of a surface with genus two fibration and their action on $\CH_0$}
	\author{Jiabin Du}
	\author{Wenfei Liu}
	\address{Shanghai Institute for Mathematics and Interdisciplinary Sciences \\ Songhu Road 657  \\ Shanghai 200433 (China)}
	\email{jiabin.du@simis.cn}
	\address{Xiamen University  \\ School of Mathematical Sciences \\ Siming South Road 422 \\ Xiamen, Fujian 361005 (China)}
	\email{wliu@xmu.edu.cn}
	\date{\today}
	\subjclass[2010]{14J50, 14J29, 14C15}
	\keywords{surface of general type, fibration of genus two, symplectic automorphism, Chow group, Bloch--Beilinson Conjecture}
		\begin{abstract}
		Let $S$ be a complex smooth projective surface with a genus two fibration, and $\Aut_s(S)$ the group of symplectic automorphisms, fixing every holomorphic 2-forms (if any) on $S$. Based on the work of Jin-Xing Cai, we observe in this paper that, if $\chi(\sO_S)\geq 5$, then $|\Aut_s(S)|\leq 2$.  Then we go on to verify, under some conditions, that $\Aut_s(S)$ acts trivially on the Albanese kernel $\CH_0(S)_{\alb}$ of the 0-th Chow group, which is predicted by a conjecture of Bloch and Beilinson. As a consequence, if an automorphism $\sigma\in \Aut(S)$ acts trivially on $H^{i,0}(S)$ for $0\leq i\leq 2$, then it also acts trivially on $\CH_0(S)_{\alb}$.
	\end{abstract}
\maketitle
%\vspace*{6pt}
\tableofcontents  % for this guide only.
% A table of contents should normally not be included

\section{Introduction}

Throughout the paper, we work over the complex numbers field $\CC$.

Let $X$ be a $n$-dimensional connected smooth projective variety  over $\CC$. When studying the automorphism group $\Aut(X)$, it is natural to look at the induced action of $\Aut(X)$ on the cohomology groups that are naturally attached to $X$, such as $H^*(X, R)$ with $R$ an abelian group or $H^*(X, \Omega_X^p)$ for $0\leq p\leq \dim X$, where $\Omega_X^p$ is the coherent sheaf of holomorphic $p$-forms on $X$. On the other hand, there is also an induced action of $\Aut(X)$ on the Chow groups $\CH_*(X)$, which is a more refined invariant than the cohomology groups. A deep conjecture of Bloch and Beilinson predicts that, roughly speaking, there is a natural decreasing filtration $F^{\bullet}\CH_*(X)_\QQ$ on the $\CH_*(X)_\QQ:=\CH_*(X)\otimes_{\ZZ}\QQ$ such that the induced action of $\Aut(X)$ on the graded pieces of $F^{\bullet}\CH_*(X)_\QQ$ is determined by the action of $\Aut(X)$ on the cohomology groups $H^*(X, \QQ)$. We refer to \cite[Section~11.2.2]{Voi03} for a precise statement of the Bloch--Beilinson conjecture.

%The $i$-th Chow group $\CH_i(X)$ of $X$ is the abelian group generated by $i$-dimensional algebraic cycles modulo rational equivalence.  

For a smooth projective surface $S$, it is clear that $\CH_2(S)\cong\ZZ$ and $\CH_1(S)\cong\Pic(S)$. Thus the focus is on the still mysterious $0$-th Chow group $\CH_0(S)$. There is a natural filtration on $\CH_0(S)$: 
\begin{equation}\label{eq: filtration}
		0\subset \CH_0(S)_{\alb}\subset \CH_0(S)_{\hom}\subset \CH_0(S)
	\end{equation}
where  $\CH_0(S)_{\hom}$ is the kernel of the degree map $\deg\colon \CH_0(S)\to \mathbb{Z}$, and $\CH_0(S)_{\alb}$ is the kernel of the Albanese map $\alb_S\colon\CH_0(S)_{\hom}\to \Alb(S)$. A famous result of Mumford \cite{Mum69} says, if the geometric genus $p_g(S)>0$, then there is no uniform integer $d>0$ such that any $\alpha\in \CH_0(S)_{\hom}$ can be written as $\alpha=\alpha^{+}-\alpha^{-}\in\CH_0(S)$ with both of $\alpha^{+}$ and $\alpha^{-}$ effective and $\deg \alpha^{+}=\deg \alpha^{-}=d$. It is thus legitimate to call $\CH_0(S)_{\hom}$ and also the Albanese kernel $\CH_0(S)_{\alb}$ infinite dimensional. 

The filtration \eqref{eq: filtration} is supposed to be the filtration in the aforementioned Bloch--Beilinson conjecture, and as a consequence, we have
\begin{conjecture}[{\cite[Conjecture~11.19]{Voi03}}]\label{conj: generalize Bloch}
    Let $S$ and $T$ be smooth projective surfaces, $\Gamma$ a cycle of codimension 2 in $S\times T$ such that the map $[\Gamma]^*\colon H^{2,0}(T)\rightarrow H^{2,0}(S)$ vanishes. Then the map $\Gamma_*\colon \CH_0(S)_\alb\rightarrow \CH_0(T)_\alb$ vanishes.
\end{conjecture}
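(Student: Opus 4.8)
The plan is to carry out the standard motivic reduction underlying the Bloch--Beilinson philosophy and then to isolate the one deep implication on which everything hinges. First I would replace the surfaces by their Chow motives. By Murre's self-dual Chow--K\"unneth decomposition $\gh(S)=\bigoplus_{i=0}^{4}\gh^i(S)$ and the refinement $\gh^2(S)=\gh^2_{\alg}(S)\oplus t_2(S)$ of Kahn--Murre--Pedrini, the \emph{transcendental motive} $t_2(S)$ realizes to the transcendental lattice $H^2_{\tr}(S)\subset H^2(S,\QQ)$ and satisfies $A_2(t_2(S))=\CH_0(S)_{\alb}$, while all the other K\"unneth summands act trivially on the Albanese kernel. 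Consequently the correspondence $\Gamma$, viewed as a morphism of Chow motives, acts on $\CH_0(S)_{\alb}$ only through its transcendental component $\gamma\colon t_2(S)\to t_2(T)$, and it suffices to prove that $\gamma$ induces the zero map on $A_2$.

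Second, I would read off homological triviality of $\gamma$ from the hypothesis. Because the Chow--K\"unneth projectors send algebraic classes to algebraic classes, the realization of $\gamma$ is the transcendental part of $[\Gamma]^*$, namely a morphism of weight-two Hodge structures between the transcendental lattices $H^2_{\tr}(T)$ and $H^2_{\tr}(S)$ whose $(2,0)$-component is the assumed zero map $H^{2,0}(T)\to H^{2,0}(S)$. Since each transcendental lattice is the smallest rational sub-Hodge structure whose complexification contains its $(2,0)$-part, a morphism of Hodge structures between them that annihilates the $(2,0)$-part is identically zero. This Hodge rigidity forces the realization of $\gamma$ to vanish, so $\gamma$ is homologically trivial.

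The crux is the remaining implication: that a homologically trivial morphism of transcendental motives acts as zero on $A_2$. This is precisely a refined instance of the Bloch--Beilinson conjecture and is the expected source of the result, via the predicted functoriality of the conjectural filtration $F^{\bullet}\CH_0$. In the \emph{endomorphism} situation $T=S$ --- the case of a symplectic automorphism $\sigma$, where $\gamma=\Gamma_{\sigma}-\Delta_S$ restricted to $t_2(S)$ --- this implication is reachable under the Kimura--O'Sullivan finite-dimensionality of $t_2(S)$: a homologically trivial endomorphism of a finite-dimensional motive is nilpotent, and combined with the finite order of $\sigma_*$ and Roitman's theorem this forces $\sigma_*=\id$ on $\CH_0(S)_{\alb}$. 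For a general correspondence between distinct surfaces, however, $\gamma$ is only a morphism $t_2(S)\to t_2(T)$ and the nilpotence machinery no longer applies: symmetrizing via the transpose ${}^t\gamma$ yields only that ${}^t\gamma\circ\gamma$ and $\gamma\circ{}^t\gamma$ act nilpotently, which does not force $\gamma$ itself to vanish, and smash-nilpotence of $\gamma$ does not control its action on $\CH_0$ because Chow groups fail the K\"unneth formula.

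This last point is exactly the main obstacle, and the reason the statement remains conjectural even for surfaces whose motives are known to be finite-dimensional: finite-dimensionality of the transcendental motive of an arbitrary surface of general type is itself open (Mumford's theorem supplies only the easy converse, that infinite-dimensionality of $\CH_0$ forces $p_g>0$), and even granting it, the passage from the endomorphism case to the general correspondence case would require genuinely new input beyond the formal yoga of motives. It is for this reason that in the body of the paper we do not attempt the conjecture in full, but verify it in the endomorphism case of symplectic automorphisms of surfaces carrying a genus two fibration, where Cai's analysis of the automorphism action supplies, in that constrained geometry, the control that finite-dimensionality would otherwise provide.
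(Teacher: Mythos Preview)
The statement you were asked to prove is a \emph{conjecture} in the paper, not a theorem: the paper cites it from \cite{Voi03} and neither proves nor claims to prove it. There is therefore no ``paper's own proof'' to compare against. Your write-up correctly recognizes this --- you explicitly say ``the statement remains conjectural'' and identify the missing implication (that a homologically trivial morphism of transcendental motives acts as zero on $A_2$) as an instance of the Bloch--Beilinson conjecture itself --- so what you have written is not a proof proposal but an expository discussion of the motivic heuristics behind the conjecture and of why it resists proof.

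As such a discussion it is essentially accurate: the reduction to the transcendental motive via Kahn--Murre--Pedrini, the Hodge-theoretic argument that $[\Gamma]^*=0$ on $H^{2,0}$ forces homological triviality on $t_2$, and the observation that Kimura finite-dimensionality handles only the endomorphism case (via nilpotence) but not a general correspondence between distinct surfaces, are all correct and well put. Just be aware that none of this constitutes a proof, and that the paper makes no attempt at one; it uses the conjecture only as motivation for the special cases (Conjecture~\ref{conj: Aut_s act on CH}) treated in Section~\ref{sec: g=2}.
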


By taking $T=S$ with $p_g(S)=0$, and $\Gamma=\Delta_S\subset S\times S$ the diagonal in Conjecture~\ref{conj: generalize Bloch}, we recover Bloch's initial conjecture. \begin{conj}[{\cite{Blo75}, see also \cite[Conjecture~11.2]{Voi03}}]\label{conj: pg=0}
Let $S$ be a smooth projective surface. If $p_g(S)=0$ then $\CH_0(S)_{\alb}=0$. 
\end{conj}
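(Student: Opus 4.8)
Since Conjecture~\ref{conj: pg=0} is one of the outstanding open problems on algebraic cycles, what follows is a plan only in the sense of recording the standard line of attack, the reductions available for the surfaces studied here, and the point at which the argument stalls. The backbone is the decomposition of the diagonal of Bloch--Srinivas: one seeks, for some $N>0$, a decomposition
\[
	N\,\Delta_S \;=\; Z_1 + Z_2 \qquad\text{in } \CH_2(S\times S)_\QQ,
\]
with $Z_1$ supported on $D_1\times S$ and $Z_2$ on $S\times D_2$ for curves $D_1,D_2\subset S$. Acting on $z\in\CH_0(S)_{\alb}$, represented after the moving lemma by a $0$-cycle missing $D_1$, one has $(Z_1)_*z=0$ while $(Z_2)_*z$ factors through $\CH_0(D_2)_\QQ$; a standard refinement of the decomposition then yields $Nz=0$, so $\CH_0(S)_{\alb}\otimes\QQ=0$, and then $\CH_0(S)_{\alb}=0$ because this group is torsion-free by Roitman's theorem. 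Thus everything comes down to producing the diagonal decomposition from the single input $p_g(S)=0$ (equivalently $H^{2,0}(S)=0$, equivalently $b_2(S)=\rho(S)$).

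The problem then splits by Kodaira dimension. When $\kappa(S)\le 1$, the surfaces with $p_g(S)=0$ are classified --- rational, ruled over a curve of positive genus, Enriques, bielliptic, properly elliptic with $p_g=0$ --- and in each class the decomposition can be written down explicitly; this is the theorem of Bloch--Kas--Lieberman. One is left with a minimal surface of general type with $p_g(S)=0$, for which $q(S)=0$ and $\chi(\sO_S)=1$ automatically, so $\Alb(S)=0$, $\CH_0(S)_{\alb}=\CH_0(S)_{\hom}$, and the claim becomes $\CH_0(S)\cong\ZZ$. Here the conjecture is open in general, and is known only for surfaces carrying enough special structure (Godeaux, Burniat, Catanese--Barlow and Keum--Naie surfaces, surfaces isogenous to a product, among others), always by importing the vanishing through an explicit self-correspondence from a curve, an abelian variety, or a surface with understood Chow group.

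In the present setting the special structure is the genus two fibration $f\colon S\to B$. The relative hyperelliptic map realises $S$, after a birational modification that does not affect $\CH_0$, as a double cover $\pi\colon S\to W$ of a smooth surface $W$ ruled over $B$, with covering involution $\iota$; since $\CH_0(W)_{\alb}=0$ for a ruled surface, the relation $\pi^*\pi_*=\id+\iota_*$ forces $\iota_*=-\id$ on $\CH_0(S)_{\alb}$. On the other hand $\iota$ acts trivially on $H^{2,0}(S)=0$, so the expected triviality of $\iota_*$ on $\CH_0(S)_{\alb}$ --- the instance of Conjecture~\ref{conj: generalize Bloch} with $T=S$ and $\Gamma=\Gamma_\iota-\Delta_S$, whose cohomological hypothesis is vacuous --- would give $\iota_*=\id$ there; comparing the two identities, $\CH_0(S)_{\alb}$ is $2$-torsion, hence (being torsion-free) zero. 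So for a genus two fibred surface Conjecture~\ref{conj: pg=0} reduces to this single Bloch-type statement about the involution $\iota$, and the plan is to establish it by analysing the fixed locus of $\iota$, the branch divisor of $\pi$, and the ambient symplectic automorphism group of $S$ --- the subject of the rest of the paper, every automorphism of which is symplectic once $p_g(S)=0$.

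The main obstacle is precisely this last step: proving $\iota_*=\id$ on $\CH_0(S)_{\alb}$ is a priori no easier than the original conjecture, because for a general-type surface with $p_g=q=0$ the group $\CH_0(S)_{\hom}$ is infinite-dimensional in Mumford's sense and its ``moving'' part is invisible to cohomology, so the identity cannot be obtained by cohomological means alone --- one must exploit the concrete geometry of $\iota$ and of the fibration, and even then additional finiteness is needed to organise the argument. This is the role of the hypothesis $\chi(\sO_S)\ge 5$, which by the first result of the paper bounds $|\Aut_s(S)|\le 2$: the honest target is to verify Conjecture~\ref{conj: pg=0}, and the automorphism refinement of Conjecture~\ref{conj: generalize Bloch}, for the resulting restricted classes of genus two fibred surfaces, rather than in full generality.
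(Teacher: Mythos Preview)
The statement under review is a \emph{conjecture}, not a theorem: the paper records Bloch's conjecture as an open problem and does not attempt to prove it. There is therefore no ``paper's own proof'' to compare your proposal against, and you yourself say as much in your opening sentence. Your summary of the standard Bloch--Srinivas strategy, the reduction via Bloch--Kas--Lieberman to the general-type case, and the observation that for a genus two fibred surface the hyperelliptic involution acts as $-\id$ on $\CH_0(S)_{\alb}$ are all accurate background, but none of it constitutes a proof, nor does the paper claim one.

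One point in your final paragraph is off. You write that the hypothesis $\chi(\sO_S)\ge 5$ is meant to restrict the class of genus two fibred surfaces with $p_g=0$ for which one then verifies Conjecture~\ref{conj: pg=0}. But a minimal surface of general type with $p_g=0$ has $q=0$ and hence $\chi(\sO_S)=1$; the condition $\chi(\sO_S)\ge 5$ is \emph{never} met by such a surface. In the paper, $\chi\ge 5$ is imposed on the ambient surface $S$ (which then has $p_g\ge 4$), and Conjecture~\ref{conj: pg=0} enters only for the \emph{quotient} surface $S/\langle\sigma\tau\rangle$, which has $p_g=0$; see Proposition~\ref{prop: assume conj}. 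So the logical flow is the reverse of what you suggest: the paper does not aim to prove Conjecture~\ref{conj: pg=0} for any class of surfaces, but rather shows that assuming it for the relevant quotients would complete the proof of Conjecture~\ref{conj: Aut_s act on CH} in the remaining open case.
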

Bloch's conjecture has been verified in various special cases (see \cite[page 444]{DL23} for a discussion), but is widely open in general.

We are interested in the induced action of automorphisms on $\CH_0(S)$. So, take $T=S$, and $\Gamma:=\Gamma_\sigma-\Delta_S$, where $\Gamma_\sigma$ is the graph of an automorphism $\sigma$ in a subgroup $G\subset \Aut(S)$, and we obtain from Conjecture~\ref{conj: generalize Bloch} the following
\begin{conjecture}[{cf.~\cite[Conjecture~1.2]{Voi12}}]\label{conj: Aut_s act on CH}
    Let $S$ be a smooth projective surface, and $G$ a group of automorphisms of $S$ acting trivially on $H^{2,0}(S)$. Then $G$ acts trivially on $\CH_0(S)_\alb$.
\end{conjecture}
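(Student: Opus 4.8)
The plan is to prove the statement not in full generality — it is the open Voisin conjecture — but for $S$ carrying a genus two fibration $f\colon S\to B$ and in the numerical range where the observation recalled in the abstract applies; this is the content of the paper's main theorem. The first move is to reduce to a single involution. Since $\chi(\sO_S)\geq 5$ forces $|\Aut_s(S)|\leq 2$, we may assume $\Aut_s(S)=\langle\sigma\rangle$ with $\sigma^2=\id$, the trivial case being vacuous. In this range $p_g(S)=\chi(\sO_S)-1+q(S)\geq 4>0$, so the relative hyperelliptic involution $\iota$ of $f$ — which acts by $-1$ on $H^0(F,\omega_F)$ for every fibre $F$, hence by $-\id$ on $H^{2,0}(S)=H^0(S,\omega_S)$ — is anti-symplectic; in particular $\iota\neq\sigma$. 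Assuming (as one may under the numerical hypotheses) that $\Aut(S)$ preserves the genus two fibration, $\iota$ is central in $\Aut(S)$, being cut out fibrewise by an intrinsic recipe that commutes with every fibre isomorphism. Thus $\{\id,\sigma,\iota,\tau\}$ with $\tau:=\sigma\iota=\iota\sigma$ is a Klein four-group, and $\tau$ acts by $-\id$ on $H^{2,0}(S)$, so it too is anti-symplectic. Since $\sigma=\iota\tau$, it suffices to prove $\iota^*=\tau^*=-\id$ on $\CH_0(S)_\alb$.

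The clean step is the hyperelliptic involution. After an $\iota$-equivariant blow-up of $S$ — which does not change $\CH_0(S)_\alb$, a birational invariant of smooth projective surfaces — the quotient $W:=S/\iota$ is smooth and $W\to B$ is a conic bundle: each fibre is the quotient of a genus two curve by its hyperelliptic involution, i.e.\ a conic, which has a rational point over $\CC(B)$ because $\CC(B)$ is a $C_1$-field (Tsen). Hence $W$ is birational to $\PP^1\times B$, so $\CH_0(W)_\alb=0$. Writing $\pi\colon S\to W$ for the double cover, one has $\pi^*\pi_*=\id+\iota^*$ on $\CH_0(S)$, and $\pi_*$ carries $\CH_0(S)_\alb$ into $\CH_0(W)_\alb=0$ (compatibility of $\alb$ with pushforward); restricting gives $(\id+\iota^*)|_{\CH_0(S)_\alb}=0$, i.e.\ $\iota^*=-\id$ on $\CH_0(S)_\alb$. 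No conjecture is needed here.

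For $\tau$ one wants to run the same double-cover argument with $W':=S/\tau$ in place of $W$: if $\CH_0(W')_\alb=0$, then $\tau^*=-\id$ on $\CH_0(S)_\alb$, and combining with the previous step yields $\sigma^*=\iota^*\tau^*=\id$ on $\CH_0(S)_\alb$, which is what we want; the consequence stated in the abstract then follows at once, since an automorphism fixing every $H^{i,0}(S)$ is in particular symplectic and hence lies in $\Aut_s(S)$. The difficulty is that $\tau$ is only anti-symplectic, so $W'$ has $p_g=0$ but may a priori be of general type, in which case $\CH_0(W')_\alb=0$ is itself an open instance of Bloch's Conjecture~\ref{conj: pg=0}. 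This is where the fibration structure and the extra hypotheses must intervene: one invokes the classification of (symplectic) automorphisms of genus two fibrations due to Cai to pin down $\sigma$, hence $\tau$, and to show that under the stated conditions $S/\tau$ is either rational or of Kodaira dimension $\leq 1$, so that $\CH_0(S/\tau)_\alb=0$ is known (Bloch--Kas--Lieberman for $\kappa\leq 1$ and $p_g=0$); alternatively, that $\tau$ fixes a curve dominating $B$, enabling a Bloch--Srinivas decomposition-of-the-diagonal argument directly on $S/\tau$.

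The main obstacle, accordingly, is the handling of $\tau=\sigma\iota$: disposing of the case where $S/\tau$ is of general type with $p_g=0$. I expect this to be exactly the step requiring Cai's case analysis together with the numerical conditions in the theorem — either to exclude that case outright, or to exhibit enough fixed geometry for $\tau$ over $B$ that the zero-cycle statement becomes accessible. Everything else — the reduction to $\sigma$, the Klein four-group, and the treatment of $\iota$ via Tsen's theorem — is formal.
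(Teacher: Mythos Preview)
Your reduction is the same as the paper's: form the Klein four-group $\langle\sigma,\iota\rangle$, use the ruled quotient $S/\iota\to B$ to get $\iota^*=-\id$ on $\CH_0(S)_\alb$, and reduce the question for $\sigma$ to whether $\CH_0(S/\sigma\iota)_\alb=0$, which by Bloch--Kas--Lieberman holds once the $p_g=0$ surface $S/\sigma\iota$ is not of general type (this is Lemma~\ref{lem: Z22 acts} and Corollary~\ref{cor: Z22 acts}). You have also correctly located the only nontrivial step.

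Where your proposal misfires is in the mechanism for that step and in what it actually achieves. The paper does not invoke a Cai-style classification of $\sigma$; the engine is Xiao's dichotomy for the canonical map $\phi_{K_S}$ (Theorem~\ref{thm: Xiao canonical map}). If $\phi_{K_S}$ has curve image, then $\phi_{K_S}$ factors as $S\xrightarrow{f}B\hookrightarrow\PP^{p_g-1}$ and $\sigma$ is forced to be \emph{fiber-preserving}; the induced fibration $S/\langle\sigma\iota\rangle\to B$ then has genus-one fibers, so the quotient is not of general type (Proposition~\ref{prop: canonical map fibration}, Lemma~\ref{lem: AutB(S)}). If $\phi_{K_S}$ is generically finite, then $\deg\phi_{K_S}=4$, $b\geq 2$, $\sigma_B\neq\id_B$, and the controlling invariant becomes the relative irregularity $q_f$: $q_f=2$ forces isotriviality (handled by Kimura finite-dimensionality, Corollary~\ref{cor: isotrivial}), and $q_f=1$ gives $q(S/\langle\sigma\iota\rangle)=1$ via an eigenvalue computation on $H^0(S,\Omega_S^1)$, again not of general type (Proposition~\ref{prop: canonical map gen finite}). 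The residual case---$q_f=0$, non-isotrivial, $S/\langle\sigma\iota\rangle$ of general type with $p_g=q=0$---is \emph{not} excluded: it is precisely the ``unless'' clause of Theorem~\ref{thm: main}(iv), is realized by \cite[Example~3.5]{Cai06b}, and the paper leaves it open as an instance of Conjecture~\ref{conj: pg=0} (Proposition~\ref{prop: assume conj}). So neither your hoped-for exclusion via Cai nor a Bloch--Srinivas fixed-curve argument is available; you should not expect a complete proof in this range.
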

Define the group $\Aut_{s}(S)$ of \emph{symplectic automorphisms}
\[
\Aut_{s}(S):=\{\sigma\in \Aut(S)\mid \text{$\sigma$ induces the trivial action on $H^{2,0}(S)=H^0(S, K_S)$}\}.
\]
and the group $\Aut_{\sO}(S)$ of \emph{$\sO$-cohomologically trivial automorphisms}
\[
\Aut_{\sO}(S):=\{\sigma\in \Aut(S)\mid \text{$\sigma$ induces the trivial action on $H^i(S, \sO_S)$ for any $i$}\}
\]
By Serre duality, we have $H^0(S, K_S)\cong H^2(S,\sO_S)^{\vee}$, and hence $\Aut_{\sO}(S)\subset \Aut_s(S)$; the two groups coincide if the irregularity $q(S):=\dim H^1(S, \sO_S)=0$. The automorphisms in $\Aut_{s}(S)$ are those fixing a general holomorphic 2-form (if any) on $S$, and hence called symplectic. 

\medskip

Conjecture~\ref{conj: Aut_s act on CH} amounts to saying that $\Aut_s(S)$ acts trivially on $\CH_0(S)_\alb$.

\medskip

For surfaces with $p_g(S)>0$, Conjecture~\ref{conj: Aut_s act on CH} is known to hold in the following cases:
\begin{itemize}
    \item $S$ is an abelian surface and $G=\Aut_{s}(S)$ (\cite[Theorems~A.1 and A.7]{BKL76} and \cite[Corollary~1.5]{Paw19});
    \item $S$ is a K3 surface, and $G\subset \Aut_{s}(S)$ is finite (\cite{Voi12, Huy12});
    \item $S=K(A)$ is the Kummer K3 surface associated to an abelian surface $A$, and $G$ is generated by $\sigma\in \Aut_s(S)$ that lifts to a group automorphism $\tilde\sigma$ of $A$ (\cite[Theorem~1.10]{Paw19});
    \item $S$ is a K3 surface with an elliptic fibration $f\colon S\rightarrow B$, and $G$ preserves the fibration structure $f$ (\cite[Theorem~1.5]{DL23});
    \item $S$ is a K3 surface with either the Picard number $\rho(S)\geq 3$ or $S$ admitting a Jacobian fibration, and $G=\Aut_s(S)$ (\cite[Theorem~1.3]{LYZ23});
    \item $S$ has Kodaira dimension one and $G=\Aut_{\sO}(S)$ (\cite[Theorem~5.10]{DL23});
    \item $S$ has Kodaira dimension one with $(p_g,q)\notin\{ (1,1),\, (2,2)\}$, and $G=\Aut_s(S)$ (\cite[Theorem~1.3]{DL23});
    \item $S$ has a finite Chow motive in the sense of Kimura (\cite{Kim05}) (this is the case if $S$ has an isotrivial fibration), and $G\subset\Aut_s(S)$ is finite (\cite[Lemma~5.9]{DL23}).
\end{itemize}

In this paper, we investigate Conjecture~\ref{conj: Aut_s act on CH} for surfaces of general type with a genus two fibration. Recall that a fibration of genus $g$ on a smooth projective surface $S$ means a morphism $f\colon S\to B$ onto a smooth projective curve $B$ with connected fibers of genus $g$. 
\begin{thm}\label{thm: main}
Let $S$ be a surface of general type with a genus two fibration $f\colon S\rightarrow B$ and $\chi(\sO_S)\geq 5$. Suppose that $\Aut_s(S)$ is nontrivial. Then the following holds.
\begin{enumerate}
\item[(i)] $|\Aut_{s}(S)|=2$.
\item[(iii)] If the canonical map $\phi_{K_S}\colon S\dashrightarrow \PP^{p_g(S)-1}$ is composed with a pencil, then $\Aut_{s}(S)$ preserves every fiber of $f$, and acts trivially on $\CH_0(S)_{\alb}$.
\item[(iv)] If the canonical map $\phi_{K_S}\colon S\dashrightarrow \PP^{p_g(S)-1}$ is generically finite onto its image $T:=\phi_{K_S}(S)$, then $\Aut_{\sO}(S)$ is trivial, and $\Aut_{s}(S)$ acts trivially on $\CH_0(S)_\alb$ unless 
\begin{enumerate}
    \item $q(S)=g(B)\geq \chi(\sO_S)-3$,
    \item a smooth model of the quotient surface $S/\langle\sigma\tau\rangle$ is of general type with $p_g=q=0$, where $\sigma$ is the generator of $\Aut_s(S)$ and $\tau$ is the hyperelliptic fibration of $f$, and
    \item $f$ is not isotrivial.
\end{enumerate}
\end{enumerate}
\end{thm}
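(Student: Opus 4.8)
The plan is to work through the structure of a genus two fibration $f\colon S\to B$ equipped with its relative hyperelliptic involution $\tau$, and to analyze the symplectic automorphism $\sigma$ that is promised to exist. The first step, following Cai's work and the bound $\chi(\sO_S)\geq 5$, is to establish part (i): since $\chi(\sO_S)$ is large, the general fiber $F$ of $f$ cannot be preserved by a large group of symplectic automorphisms, and a case analysis on the action of $\Aut_s(S)$ on the base $B$ and on a general fiber forces $|\Aut_s(S)|=2$. Here I would use that $\Aut_s(S)$ normalizes $f$ (because $f$ is, up to the relevant covers, canonically attached to $S$ when $\chi(\sO_S)\geq 5$; this is the key input from Cai), so that we get an exact sequence relating the action on $B$ and the fiberwise action, and the fiberwise part must be trivial or generated by $\tau$ — but $\tau$ is \emph{not} symplectic in general, so $\sigma$ acts faithfully on $B$ or combines with $\tau$ in a controlled way.

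For part (iii), suppose the canonical map factors through a pencil. Then $\phi_{K_S}$ composed with a pencil means $S$ maps to a curve, and since $f$ itself is (essentially) the canonical fibration in this range of $\chi$, one shows that this pencil must \emph{be} $f$ (or refine it); hence $\sigma$, preserving the canonical pencil and having order $2$ with $\sigma$ acting on $B$, in fact fixes $B$ pointwise — otherwise the quotient $S/\langle\sigma\rangle$ would have too small an invariant and contradict $\chi(\sO_S)\geq 5$. So $\sigma$ preserves every fiber of $f$. To get triviality on $\CH_0(S)_\alb$, I would invoke the fiberwise structure: $\sigma$ acts on each smooth fiber $F$ (a genus two curve) as an automorphism fixing the holomorphic $1$-form coming from restriction of a canonical form, hence acting trivially on $J(F)$, hence trivially on $\CH_0(F)_\alb$ by the classical theory of curves; then a spreading-out/Bloch-type argument (as in the cited \cite[Theorem~1.3]{DL23} for Kodaira dimension one) upgrades fiberwise triviality to triviality on $\CH_0(S)_\alb$. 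The main technical point is handling the singular fibers, which is done by a specialization argument since $\CH_0$ of the total space is generated (modulo $\CH_0(B)$-contributions) by points on general fibers.

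For part (iv), $\phi_{K_S}$ is generically finite onto $T=\phi_{K_S}(S)$. First, $\Aut_{\sO}(S)$ trivial: an $\sO$-cohomologically trivial automorphism acts trivially on $H^{2,0}$ and $H^{1,0}$, so it descends to an automorphism of the canonical image $T$ acting trivially on $H^0(T,\sO_T(1))$-related data; combined with $\chi(\sO_S)\geq 5$ and the genus two fibration structure (which constrains the canonical image sharply, via Horikawa-type analysis), this forces the automorphism to be the identity — I'd argue by counting fixed points against a holomorphic Lefschetz computation, which vanishes identically for $\sigma\in\Aut_{\sO}(S)$ and so forbids a nontrivial automorphism once $\chi$ is large. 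For the action on $\CH_0(S)_\alb$: write $G=\langle\sigma\rangle$, consider the involution $\sigma\tau$, and split into cases according to the quotient $Y$ (a smooth model of $S/\langle\sigma\tau\rangle$). If $Y$ is not of general type, or not with $p_g=q=0$, or if $q(S)<\chi(\sO_S)-3$, or if $f$ is isotrivial, then one of the already-established cases in the bulleted list applies: isotriviality gives a finite Kimura motive (\cite[Lemma~5.9]{DL23}); the bound $q(S)=g(B)$ failing, together with a correspondence argument comparing $\CH_0(S)_\alb$ with $\CH_0$ of $S/\langle\sigma\tau\rangle$ and with the relative Jacobian fibration over $B$, reduces to Bloch's conjecture for the quotient surface (known when $p_g=q=0$ and the surface is, e.g., not of general type, by the cases collected in the introduction). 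The hard part — and the reason for the list of exceptions in (iv) — is precisely the residual case where $Y$ is a general type surface with $p_g=q=0$: there Bloch's conjecture itself is open, so the argument genuinely cannot conclude, and the theorem honestly excludes it.

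The main obstacle throughout is the last one: controlling $\CH_0(S)_\alb$ via the quotient by $\sigma\tau$ only works when the quotient is a surface for which Bloch's conjecture is already known, and the genuinely new content is to show that \emph{outside} the listed exceptions one can always arrange such a favorable quotient (or invoke isotriviality/finite-dimensionality). Carrying this out requires a careful bookkeeping of how $\sigma$, $\tau$, and $\sigma\tau$ act on $H^{2,0}(S)$, $H^{1,0}(S)$, and on the motive of $S$ decomposed along $f$ — and in particular verifying that when all three exceptional conditions fail, the motive of $S$ is built from the motive of $B$, the relative Jacobian, and a surface covered by the known cases, so that the correspondence $\Gamma_\sigma-\Delta_S$ acts as zero on $\CH_0(S)_\alb$ by additivity over this decomposition.
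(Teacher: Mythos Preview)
Your proposal has a genuine gap in part (iii), and the route you sketch for (i) is not the one that actually works.

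\textbf{Part (i).} Your ``case analysis on the fiberwise action'' is not how the bound $|\Aut_s(S)|=2$ is obtained. The paper's argument is the following: the canonical map $\phi_{K_S}$ factors through the quotient $S\to S/G$, where $G=\langle\Aut_s(S),\tau\rangle$ (Lemma~\ref{lem: factor thru S/G}). Xiao's theorem (Theorem~\ref{thm: Xiao canonical map}) then bounds $\deg\phi_{K_S}\leq 4$ in the generically finite case, so $|G|\leq 4$ and $|\Aut_s(S)|\leq 2$; in the pencil case one argues separately via Lemma~\ref{lem: AutB(S)}. Your sketch never invokes the degree of $\phi_{K_S}$, which is the actual mechanism.

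\textbf{Part (iii).} This is where your argument breaks. First, your reason that $\sigma$ fixes $B$ pointwise (``otherwise $S/\langle\sigma\rangle$ would have too small an invariant'') is not the paper's, which instead shows that the invertible subsheaf $M\subset f_*\omega_S$ is very ample on $B$, so $B$ embeds in $\PP^{p_g-1}$ and $\Aut_s(S)$, acting trivially there, must fix $B$. More seriously, your fiberwise argument for triviality on $\CH_0(S)_{\alb}$ fails on two counts. First, $\sigma|_F$ does \emph{not} act trivially on $J(F)$: since $\sigma|_F$ is a nontrivial involution with genus-one quotient, it has eigenvalues $+1$ and $-1$ on $H^0(F,\Omega^1_F)$, each with multiplicity one. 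The restriction map $H^0(S,K_S)\to H^0(F,K_F)$ is not surjective, so $\sigma$ fixing $H^0(S,K_S)$ does not force $\sigma|_F$ to fix all of $H^0(F,K_F)$. Second, and fatally, for a curve $\CH_0(F)_{\alb}=0$, so ``fiberwise triviality on $\CH_0(F)_{\alb}$'' is vacuous and cannot be spread out to say anything about the infinite-dimensional group $\CH_0(S)_{\alb}$. The analogy with \cite{DL23} for elliptic surfaces does not transfer: there the fibers are elliptic and the argument runs through the Jacobian fibration in an essential way. The paper's actual proof of (iii) uses the $(\ZZ/2\ZZ)^2$-quotient trick (Corollary~\ref{cor: Z22 acts}): both $S/\langle\tau\rangle$ (a $\PP^1$-fibration over $B$) and $S/\langle\sigma\tau\rangle$ (a genus-one fibration over $B$) have $p_g=0$ and are not of general type, so Bloch's conjecture holds for them by \cite{BKL76}, and this forces $\sigma$ to act trivially on $\CH_0(S)_{\alb}$.

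\textbf{Part (iv).} Your intuition to study $S/\langle\sigma\tau\rangle$ is correct and matches the paper. But the substance is an explicit eigenvalue computation you do not supply: when $q_f=1$, the paper picks $\eta_0\in H^0(S,\Omega^1_S)\setminus f^*H^0(B,\Omega^1_B)$, uses the wedge map $\eta\mapsto f^*\eta\wedge\eta_0$ to show $\sigma$ acts as $-1$ on all of $H^0(B,\Omega^1_B)$ (forcing $\sigma_B$ to be the hyperelliptic involution of $B$), and reads off $p_g(S/\langle\sigma\tau\rangle)=0$, $q(S/\langle\sigma\tau\rangle)=1$, hence not of general type. Your ``holomorphic Lefschetz'' and ``motive decomposed along $f$'' remarks are not needed and do not substitute for this computation. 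For $\Aut_{\sO}(S)$ trivial, the paper simply observes that $\sigma$ acts nontrivially on $H^0(S,\Omega^1_S)$ (since the canonical image $T$ has irregularity $\leq 1$ while $b\geq 2$), so $\sigma\notin\Aut_{\sO}(S)$; no Lefschetz fixed-point count is required.
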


% $\phi_{K_S}$ has degree $4$, and realizes $S$ as a birational $(\ZZ/\ZZ)^2$-cover of $T$, that is, the induced field extension $K(S)/K(T)$ is Galois with Galois group isomorphic to $(\ZZ/\ZZ)^2$. In this case, 

The bound $|\Aut_{s}(S)|\leq 2$ of Theorem~\ref{thm: main} (i) is implicit in Jin-Xing Cai's work \cite{Cai06a, Cai06b} on the automorphism group $H$ of genus two fibrations acting trivially on $H^2(S, \QQ)$. More specifically, by Hodge decomposition, one has $H\subset \Aut_s(S)$, and his proof that $|H|\leq 2$ uses only this fact. 

Cai also constructed various fibered surfaces of genus two with an involution acting trivially on $H^2(S, \QQ)$ in \cite{Cai06a, Cai06b, Cai07}. Only one series of them, namely \cite[Example~3.5]{Cai06b}, satisfies all of the conditions (a)--(c) of Theorem~\ref{thm: main}, and we do not know whether or not $\Aut_s(S)$ acts trivially on $\CH_0(S)_\alb$ for $S$ in this example.

Theorem~\ref{thm: main} follows from Propositions~\ref{prop: canonical map fibration}, Proposition~\ref{prop: canonical map gen finite}, and Corollary~\ref{cor: isotrivial}. Let us explain the ideas of the proofs. Following \cite{Cai06a, Cai06b}, the proof of Theorem~\ref{thm: main} (i) is based on the results of Xiao on genus two fibrations (\cite{Xiao85}). Specifically, let $G\subset \Aut(S)$ be the subgroup generated by $\Aut_s(S)$ together with the hyperelliptic involution $\tau$. Then the canonical map $\phi_{K_S}$ of $S$ factors through the quotient map $S\rightarrow S/G$ (Lemma~\ref{lem: factor thru S/G}), and the explicit bounds for $\phi_{K_S}$ of Xiao (Theorem~\ref{thm: Xiao canonical map}) give the bound $|G|\leq 4$ and hence $|\Aut_s(S)|\leq 2$. 

Once the bound on $|\Aut_s(S)|$ is established, we have 
\[
G=\{ \id_S,\,\sigma,\,\tau,\,\sigma \tau\}\cong (\ZZ/2\ZZ)^2 .
\]
We can then decompose $\CH_0(S)_{\alb, \QQ}$ into eigenspaces with respect to the $G$-action, and see that $\Aut_s(S)$ acts trivially on $\CH_0(S)_{\alb}$ as soon as $\CH_0(S/\langle\sigma\tau\rangle)_{\alb} = 0$ (Lemma~\ref{lem: Z22 acts}). Since $p_g(S/\langle\sigma\tau\rangle)=0$, the Albanese kernel $\CH_0(S/\langle\sigma\tau\rangle)_{\alb}$ vanishes if   (a smooth model of) the quotient surface $S/\langle\sigma\tau\rangle$ is not of general type by \cite{BKL76}. The bulk of our arguments is devoted to verify the latter condition, once the surface $f\colon S\rightarrow B$ is not isotrivial but violates one of the conditions (a) and (b) in Theorem~\ref{thm: main}. The case of isotrivial fibrations has been settled once and for all by applying Kimura's finite dimensionality of such surfaces (Corollary~\ref{cor: isotrivial}).

Assuming Conjecture~\ref{conj: pg=0} holds, then $\CH_0(S/\langle\sigma\tau\rangle)_{\alb} = 0$, and we are done again (Proposition~\ref{prop: assume conj}).

Concerning the condition $\chi(\sO_S)\geq 5$ in the theorem, we remark that minimal surfaces of general type with $\chi(\sO_S)<5$ form (only) a bounded family. This condition is used mainly to ensure that the canonical map of $S$ is well-behaved and that $S$ has at most one genus two fibration on it. 

Finally, as a consequence of Theorem~\ref{thm: main}, Conjecture~\ref{conj: Aut_s act on CH} holds for the subgroup $\Aut_\sO(S)\subset \Aut_s(S)$ for surfaces of general type with a genus two fibration, whose invariants are not so small.
\begin{cor}\label{cor: main}
Let $S$ be a surface of general type with a genus two fibration $f\colon S\rightarrow B$ and $\chi(\sO_S)\geq 5$. Then $\Aut_{\sO}(S)$ acts trivially on $\CH_0(S)_\alb$.
\end{cor}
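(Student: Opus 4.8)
The plan is to derive the corollary directly from Theorem~\ref{thm: main} by showing that, under the hypothesis $\chi(\sO_S)\geq 5$, the $\sO$-cohomologically trivial automorphism group $\Aut_\sO(S)$ never falls into the single exceptional situation (a)--(c) of Theorem~\ref{thm: main}~(iv). First I would dispose of the easy branches: if $\Aut_s(S)$ is trivial there is nothing to prove; if it is nontrivial and the canonical map $\phi_{K_S}$ is composed with a pencil, then Theorem~\ref{thm: main}~(iii) already gives that $\Aut_s(S)$, and a fortiori its subgroup $\Aut_\sO(S)$, acts trivially on $\CH_0(S)_\alb$. So the only case that needs work is when $\phi_{K_S}$ is generically finite onto its image, and here Theorem~\ref{thm: main}~(iv) asserts that $\Aut_\sO(S)$ is \emph{trivial}. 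Hence in every case $\Aut_\sO(S)$ acts trivially on $\CH_0(S)_\alb$, and the corollary follows.

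Thus the real content is already packaged in Theorem~\ref{thm: main}~(iv): when $\phi_{K_S}$ is generically finite, $\Aut_\sO(S)=\{\id_S\}$. I would therefore present the proof of the corollary as a short case analysis invoking the three cases of Theorem~\ref{thm: main} (canonical map undefined is excluded since $S$ is of general type with $\chi(\sO_S)\geq 5$, so $p_g(S)\geq 2$ and $\phi_{K_S}$ is a genuine rational map), and in the generically finite case simply quote the triviality of $\Aut_\sO(S)$. For completeness I might add a sentence recalling \emph{why} that triviality holds — namely that an $\sO$-cohomologically trivial automorphism acts trivially on $H^i(S,\sO_S)$ for all $i$, in particular on $H^1(S,\sO_S)$, and Cai's analysis of genus two fibrations with a generically finite canonical map forces such an automorphism to be the identity once the invariants are large enough — but the cleanest exposition is to treat Theorem~\ref{thm: main} as a black box.

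The one subtlety to check is that the three alternatives in Theorem~\ref{thm: main} ((iii) pencil case, (iv) generically finite case, and the implicitly excluded "$\phi_{K_S}$ not a morphism/undefined" case) genuinely exhaust all possibilities under $\chi(\sO_S)\geq 5$. Since $S$ is of general type, $p_g(S)\geq \chi(\sO_S)-q(S)$; one still needs $p_g(S)\geq 2$ for $\phi_{K_S}$ to map to a positive-dimensional projective space, and this is where the hypothesis $\chi(\sO_S)\geq 5$ (together with the structure of genus two fibrations, which bounds $q(S)$) is used — this is exactly the role the introduction ascribes to that numerical hypothesis. I expect this bookkeeping about $p_g$ and $q$ to be the only potential obstacle, and it is a minor one: it is already implicit in the setup of Theorem~\ref{thm: main}, so the corollary's proof is essentially a two-line deduction once that theorem is in hand.
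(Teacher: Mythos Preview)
Your proposal is correct and is exactly the deduction the paper intends: the corollary is stated without proof as an immediate consequence of Theorem~\ref{thm: main}, via precisely the case split you describe (trivial $\Aut_s(S)$; pencil case handled by (iii); generically finite case handled by the clause in (iv) asserting $\Aut_\sO(S)=\{\id_S\}$). One small correction to your bookkeeping: the identity is $p_g(S)=\chi(\sO_S)+q(S)-1$, not $p_g(S)\geq\chi(\sO_S)-q(S)$, so $\chi(\sO_S)\geq 5$ alone already gives $p_g(S)\geq 4$ with no need to bound $q(S)$; thus the exhaustiveness of cases (iii) and (iv) is immediate.
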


The organization of the paper is as follows: We recall in Section~\ref{sec: prelim} some relevant notions and facts such as fibration-preserving automorphisms, the induced action on the Albanese variety and the $0$-th Chow group, as well as useful criteria for the triviality of the induced action on the Albanese kernel. In Section~\ref{sec: g=2}, we focus on genus two fibrations, first using the canonical map to bound the symplectic automorphism group $\Aut_s(S)$, and then verifying the triviality of its induced action on the Albanese kernel as stated in Theorem~\ref{thm: main}.

\medskip

\noindent {\bf Notation and Conventions.}

\medskip

Let $S$ be smooth projective surface over $\CC$.
\begin{itemize}
\item For a coherent sheaf $\sF$ on $S$,  we denote $h^i(X, \sF):=\dim_\CC H^i(X, \sF)$, and  $\sF^{\vee}:=\sH om_{\sO_S}(\sF, \sO_S)$, the dual of $\sF$.
\item For $0\leq i\leq 2$, $\Omega_S^i$ denotes the sheaf of $i$-forms on $S$.
\item $\omega_S $ denotes the canonical sheaf of $S$, which can be identified with $\Omega_S^2$, and $K_S$ denotes a canonical divisor of $S$. 
\item The following numerical invariants are attached to $S$:
\begin{itemize}
    \item the \emph{geometric genus} $p_g(S):=h^0(S, \omega_S)$;
    \item the \emph{irregularity} $q(S):=h^1(S, \sO_S) = h^0(S, \Omega_S^1)$;
    \item the \emph{holomorphic Euler characteristic} $\chi(\sO_S) = 1- q(S) + p_g(S)$.
\end{itemize}
\end{itemize} 
For a singular projective surface $T$, say with rational singularities, its geometric genus $p_g(T)$, irregularity $q(T)$ are defined as the corresponding invariants of its smooth model.

For a finite group $G$ and an element $\sigma\in G$, their orders are denoted by $|G|$ and $|\sigma|$ respectively. An action of $G$ on a set $X$ is called \emph{trivial} if $\sigma(x)=x$ for any $\sigma\in G$ and any $x\in X$.

\medskip

\noindent{\bf Acknowledgements.} We would like to thank Professor Jin-Xing Cai for communications on some questions related to the paper.
%Our proof is based on the work of Gang Xiao on genus two fibrations (see \cite{Xiao85a}).
% More precisely, we prove that
% \begin{thm}\label{thm:main}
% 	Let $f\colon S\to B$ be a fibered surface  of genus two. Suppose that one of the following conditions is satisfied 
%  \begin{itemize}
%      \item[(1)] the genus of base curve $B$ satisfies $g(B)\geq 2$;
%      \item[(2)] the relative irregularity $q_f:=q(S)-g(B)>0$;
%      \item[(3)] the holomorphic Euler characteristic $\chi(\sO_S)\geq 4$.
%  \end{itemize}
%  Then $\Aut_s(S)\cap \Aut_f(S)$ acts trivially on $\CH_0(S)_{\alb}$???
% \end{thm}
%  Given a fibration $f\colon S\to B$,  $\Aut_{f}(S)$ denotes the group of fibration-preserving automorphisms of $S$.

% Furthermore, after strengthening conditions on automorphisms, we have
% \begin{thm}\label{thm:main2}
% 	Let $S\to B$ be a fibered surface of genus two and $g(B)\geq 2$. Then $\Aut_{\sO}(S)\cap \Aut_f(S)$ acts trivially on $\CH_0(S)_{\alb}$.
% \end{thm}

\section{Preliminaries}\label{sec: prelim}
In this section, we recall the notions and facts that are used in this paper.
\subsection{Fibered surfaces and their automorphisms}\label{key}
%We always work over the complex numbers $\CC$. \\

Let $S$ be a normal projective surface, and $f\colon S\to B$ a fibration onto a smooth projective curve, that is, $f$ is a surjective morphism with connected fibers. The fibration is called 
\begin{itemize}
    \item \emph{of genus $g$} if its general fiber has genus $g$;
    \item \emph{hyperelliptic} (resp.~\emph{elliptic}, resp.~\emph{a $\PP^1$-fibration}) if its smooth fibers are hyperelliptic (resp.~elliptic, resp.~$\PP^1$);
        \item \emph{isotrivial} if its smooth fibers are mutually isomorphic.
    \end{itemize} We call $q_f:=q(S)-g(B)$ the \emph{relative irregularity} of $f$.
% The genus of a fibration means the genus of a general fiber, usually denoted by $g$. For a genus $g$ fibration $f\colon S\rightarrow B$ over a curve $B$ of genus $b$, one has
% \begin{equation}\label{eq: deg relative canonical}
%     \deg f_*\omega_{S/B} = \chi(\sO_S) - (g-1)(b-1).
% \end{equation}

% \begin{itemize}
% \item \emph{relatively minimal} if there is no $(-1)$-curves on $S$ contained in fibers;
% %\item \emph{smooth} if every fiber is smooth;
% %\item \emph{locally trivial} if it is smooth and isotrivial. 
% \end{itemize}
%We usually denote by $g$ the genus of the general fiber $F$ of $f$ and by $b$ the genus of the base curve $B$. 

The subgroup $\Aut_{f}(S)$ of \emph{fibration-preserving automorphisms} is defined as follows:
  \[
    \Aut_{f}(S):= \{\sigma\in \Aut(S) \mid \text{$\sigma$ maps fibers of $f$ to fibers} \}
    \]
    There is an induced action of $\Aut_f(S)$ on $B$, that is, a homomorphism $r\colon \Aut_f(S)\rightarrow \Aut(B)$, such that for any $\sigma\in \Aut_f(S)$ there is an induced automorphism $\sigma_B=r(\sigma)\in \Aut(B)$ such that the following diagram is commutative:
    \[
    \begin{tikzcd}
        X \arrow[r, "\sigma"]\arrow[d, "f"']&  X \arrow[d, "f"]\\
        B \arrow[r, "\sigma_B"] & B
    \end{tikzcd}
    \]
The elements of $\Aut_B(S):=\ker r$ are called \emph{fiber-preserving automorphisms}.

\begin{lemma}\label{lem-finitenessAut}
    Let $f\colon S\to B$ be a non-isotrivial fibration of genus $g\geq 2$. Then $\Aut_f(S)$ is a finite group.
\end{lemma}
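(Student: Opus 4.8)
The plan is to exhibit $\Aut_f(S)$ as an extension of a finite subgroup of $\Aut(B)$ by a finite group $\Aut_B(S)$ of fiber-preserving automorphisms, and to handle each of these two factors in turn. Concretely, the exact sequence
\[
1 \longrightarrow \Aut_B(S) \longrightarrow \Aut_f(S) \xrightarrow{\ r\ } \Aut(B)
\]
reduces the claim to showing that both $\Aut_B(S)$ and the image $r(\Aut_f(S)) \subset \Aut(B)$ are finite.

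First I would treat the fiber-preserving part $\Aut_B(S)$. Pick a general point $b \in B$ so that the fiber $F_b = f^{-1}(b)$ is a smooth curve of genus $g \geq 2$. An element $\sigma \in \Aut_B(S)$ restricts to an automorphism of each such $F_b$, and $\Aut(F_b)$ is finite since $g \geq 2$ (Hurwitz). Varying $b$, one gets a homomorphism from $\Aut_B(S)$ into the finite group $\Aut(F_{b_0})$ for a fixed general $b_0$: indeed, by rigidity of automorphisms of curves of general type, an automorphism of $S$ that restricts to the identity on one general fiber must restrict to the identity on all nearby fibers, hence on a dense open subset of $S$, hence is $\id_S$. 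So $\Aut_B(S)$ injects into $\Aut(F_{b_0})$ and is finite. (Alternatively, one may invoke that $\omega_{S/B}$ is $f$-ample for a relatively minimal genus $g \geq 2$ fibration, so $\Aut_B(S)$ acts faithfully on a relative projective embedding with fixed Hilbert polynomial and is therefore a linear algebraic group of dimension zero, i.e.\ finite; but the elementary argument above suffices.)

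Next I would bound the image $r(\Aut_f(S)) \subset \Aut(B)$. If $g(B) \geq 2$ then $\Aut(B)$ is already finite and there is nothing to prove. If $g(B) \leq 1$, finiteness of $\Aut(B)$ fails in general, so here is where non-isotriviality enters. The fibration $f$ determines a classifying map (period/moduli map) $\mu \colon B \dashrightarrow \overline{\mathcal{M}}_g$ to (a compactification of) the moduli of genus $g$ curves, and $\mu$ is non-constant precisely because $f$ is not isotrivial. Any $\sigma_B = r(\sigma)$ with $\sigma \in \Aut_f(S)$ permutes the fibers compatibly with their isomorphism classes, hence $\mu \circ \sigma_B = \mu$; so $r(\Aut_f(S))$ is contained in the subgroup of $\Aut(B)$ preserving the fibers of the non-constant map $\mu$. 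Such a subgroup is finite: if $g(B) = 0$ this is clear since a non-constant map from $\PP^1$ has only finitely many fibers with any prescribed value and an automorphism preserving $\mu$ permutes a finite nonempty set of special fibers faithfully; if $g(B) = 1$ the subgroup of translations of $B$ must act trivially on $\mu$, forcing it to be trivial (a non-constant map to a variety cannot be invariant under a positive-dimensional group of translations), leaving only a finite group. Thus $r(\Aut_f(S))$ is finite, and combining with the previous paragraph, $\Aut_f(S)$ is finite.

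The main obstacle is the second step: the first step (finiteness of $\Aut_B(S)$) is a soft consequence of $g \geq 2$ and rigidity, whereas controlling the action on the base genuinely requires the non-isotriviality hypothesis and a clean statement that automorphisms of $B$ preserving a non-constant moduli map form a finite group. One must be slightly careful that $\mu$ extends as a morphism over all of $B$ (using properness of $\overline{\mathcal{M}}_g$, possibly after semistable reduction) and that "preserving isomorphism classes of fibers" is correctly formulated when $F_b$ is singular or multiple; restricting attention to the general fiber, where $\mu$ is a genuine morphism to $\mathcal{M}_g$, circumvents this.
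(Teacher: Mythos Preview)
Your approach is the same as the paper's: use the exact sequence $1 \to \Aut_B(S) \to \Aut_f(S) \xrightarrow{r} \Aut(B)$, bound $\Aut_B(S)$ by injecting into $\Aut(F)$ for a general fiber $F$, and bound $r(\Aut_f(S))$ using the moduli map $\mu\colon B \to \overline{\mathcal{M}}_g$. The paper handles the image more uniformly, without splitting on $g(B)$: since any $\sigma_B \in r(\Aut_f(S))$ satisfies $\mu\circ\sigma_B = \mu$, the group $r(\Aut_f(S))$ acts on $K(B)$ fixing the subfield $\mu^*K(\mu(B))$, hence injects into the automorphism group of a finite field extension and has order at most $\deg\mu$. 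This gives the explicit bound $|\Aut_f(S)| \leq |\Aut(F)|\cdot \deg\mu$.

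Your case analysis reaches the same conclusion but the $g(B)=0$ step has a small slip: the claim that automorphisms preserving $\mu$ permute the special fibers \emph{faithfully} is not true in general. For instance, if $\mu\colon \PP^1 \to \PP^1$ is $z\mapsto z^d$, the only ramification points are $0$ and $\infty$, and the automorphisms $z\mapsto \zeta z$ with $\zeta^d=1$ fix both of them, so the action on $\{0,\infty\}$ is trivial while the group has order $d$. The finiteness still holds, of course, but for the reason the paper gives (or by the function-field argument above), not by faithfulness on special fibers. Replacing that sentence with the $\deg\mu$ bound fixes the gap and also removes the need to treat $g(B)=0,1,\geq 2$ separately.
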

\begin{proof}
    Consider the  moduli map associated to $f$
    $$
    \mu\colon B\to \overline{\sM}_g,
    $$
    where $\overline\sM_g$ is the (compact) moduli space of stable curves of genus $g$. Since $f$ is not isotrivial, $\mu$ is generically finite, and the image $\Aut_f(S)|_B$ of $r\colon\Aut_f(S)\rightarrow \Aut(B)$ has order at most $\deg\mu$. It follows that
    \begin{equation}\label{eq: bound Aut_f}
        |\Aut_f(S)|= |\Aut_B(S)|\cdot |\Aut_f(S)|_B|= |\Aut(F)|\cdot |\Aut_f(S)|_B| \leq |\Aut(F)|\cdot \deg\mu
    \end{equation}
where $F$ is a general fiber of $f$. Since $g(F)=g\geq 2$, $\Aut(F)$ is a finite group, and hence $\Aut_f(S)$ is also a finite group by \eqref{eq: bound Aut_f}.
\end{proof}

\subsection{The induced action on the Albanese variety and the $0$-th Chow group}
The Chow groups of a normal projective variety $X$ is the group of rational equivalence classes of algebraic cycles on $X$; we refer to \cite[Chapter~9]{Voi03} for the basic properties of Chow groups.

Let $X$ be a smooth projective variety. Fixing a base point $x_0\in X$, we may define a group homomorphism
\[
\alb\colon\CH_0(X)_{\hom}\rightarrow \Alb(X) = H^0(X, \Omega_{X}^1)^{\vee}/H_1(X, \ZZ),\quad \sum_i n_i [x_i] \mapsto \sum_{i} n_i\left[\int_{x_0}^{x_i}\right]
\]
where $\int_{x_0}^{x_i}\colon H^0(X, \Omega_{X}^1)\rightarrow \CC$ maps a holomorphic 1-form $\eta$ to the integral $\int_{x_0}^{x_i}\eta$, determined up to $\int_\gamma \eta$ for a closed loop $\gamma$ on $X$. The homomorphism $\alb$ does not depend on the choice of the base point $x_0$.

Any automorphism group $G\subset \Aut(X)$ has an induced action on $\CH_0(X)$ and $\Alb(X)$ as follows: For $\sigma\in G$, $\sum_i n_i[x_i]\in \CH_0(X)$, and $\left[\int_{x_0}^x\right]\in \Alb(X)$, 
\[
\sigma_*(\sum_i n_i[x_i])= \sum_i n_i[\sigma(x_i)], \quad \sigma_*(\left[\int_{x_0}^x\right])= \left[\int_{\sigma(x_0)}^{\sigma(x)}\right]=\left[\int_{x_0}^{\sigma(x)}\right]-\left[\int_{x_0}^{\sigma(x_0)}\right]
\]
Note that $G$ acts by group automorphisms on $\CH_0(X)$ and $\Alb(X)$, and the homomorphism $\alb$ is $G$-equivariant. The $G$-action extends in an obvious way to $\CH_0(X)_\QQ:=\CH_0(X)\otimes_\ZZ \QQ$ and $\Alb(X)_\QQ:=\Alb(X)\otimes_{\ZZ}\QQ$.

Let $Z$ be a normal projective birational model of $X$, with at most rational singularities. Then we have natural identifications (see \cite[Section~2.1]{JLZ23} for a discussion for the Albanese variety of a singular variety):
\[
\Alb(Z) \cong \Alb(X),\quad \CH_0(Z) \cong \CH_0(X).
\]

The following basic but important fact about the $G$-actions on $\Alb(X)_\QQ$ and $\CH_0(X)_\QQ$ should be well-known, but we write a proof of it for completeness (cf.~\cite[Lemma~1.6]{Lat21}). 
\begin{lemma}\label{lem: CH inv}
 Let $X$ be a smooth projective variety and $G\subset \Aut(X)$ a finite subgroup. Let $Y$ be a normal projective birational model with rational singularities of the quotient surface $X/G$. Then 
 $$
 \Alb(X)_{\QQ}^G\cong \Alb(Y)_{\QQ},\quad \CH_0(X)_{\alb,\QQ}^{G}\cong \CH_0(Y)_{\alb,\QQ},\quad \CH_0(X)_{\hom,\QQ}^{G}\cong \CH_0(Y)_{\hom,\QQ}.
 $$ 
\end{lemma}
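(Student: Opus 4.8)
The plan is to build the isomorphisms from the quotient map $\pi\colon X\to X/G$ and its relation to the birational model $Y$, using the transfer (push-pull) argument that is standard for quotients by finite groups once one works with $\QQ$-coefficients. First I would observe that, since $Y$ has rational singularities and is birational to $X/G$, we have canonical identifications $\Alb(Y)\cong\Alb(X/G)$ and $\CH_0(Y)\cong\CH_0(X/G)$ (and likewise for the $\hom$ and $\alb$ subgroups, using the natural filtration), so it suffices to prove the statement with $Y$ replaced by $X/G$. Then I would pass to a $G$-equivariant resolution: let $\tilde{X}\to X$ be the identity (X is already smooth) and instead resolve $X/G$ — actually the cleanest route is to note $\pi_*\colon\CH_0(X)_\QQ\to\CH_0(X/G)_\QQ$ and $\pi^*\colon\CH_0(X/G)_\QQ\to\CH_0(X)_\QQ$ satisfy $\pi_*\pi^*=|G|\cdot\id$ and $\pi^*\pi_*=\sum_{g\in G}g_*$. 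Dividing by $|G|$, the map $\frac{1}{|G|}\pi^*\pi_*$ is the projector onto $\CH_0(X)_\QQ^G$, and $\pi_*$ restricted to the invariants is an isomorphism onto $\CH_0(X/G)_\QQ$ with inverse $\frac{1}{|G|}\pi^*$. This gives $\CH_0(X)_\QQ^G\cong\CH_0(X/G)_\QQ\cong\CH_0(Y)_\QQ$.

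Next I would check that these isomorphisms respect the filtration $0\subset\CH_0(\cdot)_{\alb}\subset\CH_0(\cdot)_{\hom}\subset\CH_0(\cdot)$. For the degree map this is immediate: $\deg(\pi_*\alpha)=\deg(\alpha)$ up to the factor $\deg\pi=|G|$ on one side — more carefully, $\pi^*$ multiplies degree by $1$ on zero-cycles supported on general points divided appropriately, so one just tracks that $\pi_*$ and $\pi^*$ send degree-zero cycles to degree-zero cycles, hence $\CH_0(X)_{\hom,\QQ}^G\cong\CH_0(Y)_{\hom,\QQ}$. For the Albanese part I would use that $\pi$ induces a morphism $\Alb(\pi)\colon\Alb(X)\to\Alb(X/G)$, that $\alb$ is functorial, and that on $\QQ$-coefficients $\Alb(X/G)_\QQ\cong\Alb(X)_\QQ^G$ by the same transfer argument applied to $H^0(\Omega^1)$ (equivalently, $H_1(\cdot,\QQ)$): indeed $H^0(X,\Omega^1_X)^G=H^0(X/G,\Omega^1)$ since holomorphic $1$-forms descend, and similarly for $H_1$. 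Combining the commutative square relating $\alb_X$, $\alb_{X/G}$, $\pi_*$ and $\Alb(\pi)$ with the fact that $\alb$ is $G$-equivariant, one gets that $\pi_*$ carries $\CH_0(X)_{\alb,\QQ}^G$ isomorphically onto $\ker\bigl(\alb_{X/G}\bigr)=\CH_0(X/G)_{\alb,\QQ}$, and then transports along $\CH_0(Y)\cong\CH_0(X/G)$, $\Alb(Y)\cong\Alb(X/G)$.

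The routine parts are the projector formulas $\pi_*\pi^*=|G|$, $\pi^*\pi_*=\sum_g g_*$, which hold for any finite quotient of a quasi-projective variety by \cite[Example 1.7.6]{Fulton} type arguments (or directly on zero-cycles, where $\pi^*[\bar{x}]=\sum_{y\in\pi^{-1}(x)}e_y[y]$ with ramification multiplicities summing to $|G|$). The one point requiring a little care — and the main obstacle — is the interplay with the singular model: I must make sure that $\CH_0$, $\Alb$, and the degree/Albanese maps are genuinely insensitive to replacing $X/G$ by a smooth (or rational-singularity) birational model $Y$. For $\CH_0$ this is because a proper birational morphism between normal varieties that is an isomorphism in codimension $1$ (or more generally with rationally connected fibers over the singular locus) induces an isomorphism on $\CH_0$ — for surfaces with rational singularities the exceptional fibers of a resolution are trees of $\PP^1$'s, so every exceptional curve is rationally equivalent to a multiple of a point and $\CH_0$ is unchanged; for $\Alb$ one invokes \cite{JLZ23} as the paper already does. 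Once these identifications are in place the rest is formal, so I would state them as a preliminary remark (already essentially recorded in the excerpt just before the lemma) and then give the transfer argument cleanly.
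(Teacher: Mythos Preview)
Your argument is correct in substance and close in spirit to the paper's, but the packaging differs. You run the standard transfer argument: define $\pi^*$ and $\pi_*$, use $\pi_*\pi^*=|G|\cdot\id$ and $\pi^*\pi_*=\sum_{g}g_*$ to obtain a projector onto $\CH_0(X)_\QQ^G$, and then check compatibility with the degree and Albanese maps. The paper instead works only with $\pi_*$, sets up the two short exact sequences $0\to\CH_0(\cdot)_{\alb,\QQ}\to\CH_0(\cdot)_{\hom,\QQ}\to\Alb(\cdot)_\QQ\to 0$ for $X$ and $X/G$, proves the Albanese comparison directly from the description $\Alb(X)_\QQ=H^0(\Omega_X^1)^\vee\otimes\QQ/H_1(X,\QQ)$, builds an explicit inverse to $\pi_*$ on $\CH_0(\cdot)_{\hom,\QQ}$ by first moving every $0$-cycle off the branch locus (citing \cite[Fact~3.3]{Voi12}), and then concludes for the Albanese kernel via the Five Lemma. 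Your route is a bit more conceptual; the paper's is more hands-on and avoids ever invoking $\pi^*$ on the singular quotient.

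That last point is the one place you should tighten. Your projector formula needs a well-defined pullback $\pi^*\colon\CH_0(X/G)_\QQ\to\CH_0(X)_\QQ$, but $X/G$ is singular and $\pi$ is not flat over the branch locus, so Fulton's flat pullback (your reference to Example~1.7.6) does not apply directly. Your parenthetical ``directly on zero-cycles'' gives the right formula on the level of cycles, but you still need to know it descends to rational equivalence; the cleanest fix is exactly the moving step the paper uses---represent every class by a cycle supported where $\pi$ is \'etale, and pull back there. Once you insert that sentence, your argument is complete and essentially equivalent to the paper's, with the Five Lemma replaced by a direct check that the projector respects the filtration.
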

\begin{proof}
Since $X/G$ and $Y$ have at most rational singularities, there are natural identifications
\[
\Alb(X/G)=\Alb(Y), \quad \CH_0(X/G)_{\hom, \QQ}\cong \CH_0(Y)_{\hom, \QQ},\quad\CH_0(X/G)_{\alb,\QQ}\cong \CH_0(Y)_{\alb,\QQ}.
\]
Thus we may assume that $Y=X/G$.

Recall that 
\[
\Alb(X)_\QQ = \frac{H^0(X, \Omega_X^1)^{\vee}\otimes_\ZZ\QQ}{H_1(X, \QQ)},
\]
and hence we have natural isomorphisms of $\QQ$-vector spaces
\begin{equation}\label{eq: Alb^G}
    \Alb(X)_\QQ^G \cong  \frac{\left(H^0(X, \Omega_X^1)^G\right)^{\vee}\otimes_\ZZ \QQ}{H_1(X, \QQ)^G}\cong  \frac{H^0\left(Y, \Omega_Y^1\right)^{\vee}\otimes_\ZZ \QQ}{H_1(Y, \QQ)} \cong \Alb(X/G)_\QQ
\end{equation}
where $Y$ is a smooth projective model of $X$.

By the universal property of the Albanese morphism and by the flatness of $\QQ$ as a $\ZZ$-module, the quotient map $\pi\colon X\to X/G$ induces the following commutative diagram with exact rows:
\[
\begin{tikzcd}
 0\arrow[r]&\CH_0(X)_{\alb,\QQ}\arrow[r]\arrow[d, "\pi_*"]&\CH_0(X,\QQ)_{\hom,\QQ}\arrow[r]\arrow[d, "\pi_*"]&\Alb(X)_{\QQ}\arrow[r]\arrow[d, "\pi_*"]&0\\
0\arrow[r]&\CH_0(X/G)_{\alb,\QQ}\arrow[r]&\CH_0(X/G)_{\hom,\QQ}\arrow[r]&\Alb(X/G)_\QQ\arrow[r]&0   
\end{tikzcd}
\]
Restricting to the $G$-invariant parts of the $\QQ$-vector spaces in the first row, we obtain
the following commutative diagram with exact rows:
\[
\begin{tikzcd}
 0\arrow[r]&\CH_0(X)_{\alb,\QQ}^G\arrow[r]\arrow[d, "\alpha"]&\CH_0(X,\QQ)_{\hom,\QQ}^G\arrow[r]\arrow[d, "\beta"]&\Alb(X)_{\QQ}^G\arrow[r]\arrow[d, "\gamma"]&0\\
0\arrow[r]&\CH_0(X/G)_{\alb,\QQ}\arrow[r]&\CH_0(X/G)_{\hom,\QQ}\arrow[r]&\Alb(X/G)_\QQ\arrow[r]&0   
\end{tikzcd}
\]
and we have seen in \eqref{eq: Alb^G} that $\gamma$ is an isomorphism.

For a point $x\in X$, let $\bar x$ be its image in $X/G$. Any $0$-cycle on $X/G$ is rationally equivalent to one with support outside the branch locus $\sB$ of $\pi$ (\cite[Fact~3.3]{Voi12}).  For $[z]\in \CH_0(X/G)_{\hom,\QQ}$, we may thus assume that $z=\sum_i n_i \bar x_i$ with $\bar x_i\not\in \sB$ for any $i$, and one sees that $\beta$ has an inverse
\[
\beta^{-1}([z]) = \frac{1}{|G|}\sum_i \sum_{g\in G} [g(x_i)].
\]
Therefore, $\beta$ is an isomorphism. 

By the Five Lemma, $\alpha$ is also an isomorphism.
\end{proof}

\subsection{Useful criteria for an symplectic automorphism to act trivially on $\CH_0(S)_{\alb}$}
\begin{lemma}[{cf.~\cite[Lemma~5.9]{DL23}}]\label{lem:fdChow}
	Let $S$ be a smooth projective surface. Assume that the Chow motive $\mathfrak{h}(S)$  of $S$ is finite dimensional in the sense of  Kimura \cite{Kim05} and O'Sullivan. Then any symplectic automorphism of finite order of $S$ acts trivially on $\CH_0(S)_{\alb}$.
\end{lemma}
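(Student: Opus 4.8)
The plan is to use the theory of finite-dimensional Chow motives, following Kimura's original arguments and their refinement by Guletskii--Pedrini and others. The key point is that a symplectic automorphism $\sigma$ of finite order $n$ acts on the Chow motive $\mathfrak{h}(S)$, which decomposes (by Murre's theory of Chow--Künneth decompositions for surfaces, due to Murre) as $\mathfrak{h}(S) = \mathfrak{h}_0(S)\oplus \mathfrak{h}_1(S)\oplus \mathfrak{h}_2(S)\oplus \mathfrak{h}_3(S)\oplus \mathfrak{h}_4(S)$, where $\mathfrak{h}_0$ and $\mathfrak{h}_4$ are Lefschetz motives, $\mathfrak{h}_1$ and $\mathfrak{h}_3$ are the motives of the Picard/Albanese varieties, and the transcendental part $\mathfrak{t}_2(S)$ splits off $\mathfrak{h}_2(S)$. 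One has $\CH_0(S)_{\mathrm{alb},\QQ} = \CH_*(\mathfrak{t}_2(S))$, so it suffices to show that $\sigma$ acts as the identity on $\CH_*(\mathfrak{t}_2(S))$.

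The next step is to relate the action on Chow groups to the action on cohomology via finite dimensionality. The crucial input is Kimura's nilpotence theorem: if $\mathfrak{h}(S)$ is finite-dimensional, then any correspondence in $\mathrm{End}(\mathfrak{h}(S))$ that is homologically trivial is nilpotent. I would apply this to the correspondence $\Gamma := \Gamma_\sigma - \Delta_S$ restricted to (or rather, composed with the projector defining) $\mathfrak{t}_2(S)$. Since $\sigma$ is symplectic, it acts trivially on $H^{2,0}(S) = H^0(S,\Omega^2_S)$, hence by conjugation trivially on $H^{0,2}(S)$; moreover, after passing to $\mathfrak{t}_2(S)$ the only surviving cohomology is $H^2_{\mathrm{tr}}(S)$, whose Hodge structure has Hodge numbers concentrated in $(2,0), (1,1), (0,2)$. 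One then needs that $\sigma$ acts trivially on all of $H^2_{\mathrm{tr}}(S,\QQ)$, not just on $H^{2,0}$. This follows because the action preserves the Hodge decomposition and is defined over $\QQ$: $\sigma^* = \mathrm{id}$ on $H^{2,0}$ forces $\sigma^*=\mathrm{id}$ on $H^{0,2}$ by complex conjugation, and then the $\QQ$-structure together with the fact that the $(1,1)$-part of $H^2_{\mathrm{tr}}$ is spanned (over $\CC$) in a way compatible with the rational structure — more precisely, the minimal Hodge substructure generated by $H^{2,0}$ is all of $H^2_{\mathrm{tr}}$ by definition of the transcendental part — gives $\sigma^* = \mathrm{id}$ on $H^2_{\mathrm{tr}}(S,\QQ)$. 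Hence the induced endomorphism of $\mathfrak{t}_2(S)$ is homologically trivial.

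By Kimura nilpotence this endomorphism $e := p_{\mathfrak{t}_2}\circ(\Gamma_\sigma - \Delta_S)\circ p_{\mathfrak{t}_2}$ of $\mathfrak{t}_2(S)$ is nilpotent, say $e^{N} = 0$ in $\mathrm{End}(\mathfrak{t}_2(S))$. On the other hand, since $\sigma$ has finite order $n$, the operator $\Gamma_\sigma$ restricted to $\mathfrak{t}_2(S)$ is semisimple after tensoring with $\QQ$ (it satisfies $x^n = 1$), so $e = \Gamma_\sigma|_{\mathfrak{t}_2} - \mathrm{id}$ is also semisimple; a semisimple nilpotent operator is zero. Therefore $\Gamma_\sigma$ acts as the identity on $\mathfrak{t}_2(S)$, and in particular $\sigma_* = \mathrm{id}$ on $\CH_*(\mathfrak{t}_2(S)) = \CH_0(S)_{\mathrm{alb},\QQ}$. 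Finally, since $\CH_0(S)_{\mathrm{alb}}$ is a divisible group but the argument over $\QQ$ together with the known structure (it is a $\QQ$-vector space after tensoring, and $\sigma_*-\mathrm{id}$ kills it rationally; one checks there is no torsion obstruction because $\CH_0(S)_{\mathrm{alb}}$ is torsion-free for surfaces by Roitman's theorem) one concludes $\sigma_* = \mathrm{id}$ on $\CH_0(S)_{\mathrm{alb}}$ integrally, not just rationally.

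\medskip

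\noindent I expect the main obstacle to be twofold. First, one must be careful that "semisimple" and "nilpotent" are compared in the right endomorphism ring: Kimura nilpotence lives in $\mathrm{End}(\mathfrak{t}_2(S))$ as a $\QQ$-algebra, and one needs to know that the cyclic group generated by $\sigma$ acts semisimply there, which uses Maschke's theorem applied to the finite group action on the finite-dimensional motive — this is standard but should be cited carefully (e.g. to \cite{Kim05} directly, where exactly this kind of argument appears). Second, passing from the rational statement to the integral statement on $\CH_0(S)_{\mathrm{alb}}$ requires invoking Roitman's theorem on torsion of zero-cycles to rule out a torsion discrepancy; alternatively, for the purposes of Conjecture~\ref{conj: Aut_s act on CH} as stated one may only care about the rational statement, in which case this last subtlety disappears. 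I would also remark that the hypothesis of finite dimensionality is exactly what is needed and nothing more: without it, the homologically trivial correspondence $e$ need not be nilpotent, and the conclusion can genuinely fail.
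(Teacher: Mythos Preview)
The paper does not give a proof of this lemma at all: it is stated with the reference ``cf.~\cite[Lemma~5.9]{DL23}'' and no argument, and the text proceeds directly to Corollary~\ref{cor: isotrivial}. So there is nothing in the paper to compare your proposal against beyond the citation.

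Your argument is correct and is exactly the standard proof one expects to find behind that citation: the Kahn--Murre--Pedrini refinement of Murre's Chow--K\"unneth decomposition identifies $\CH_0(S)_{\alb,\QQ}$ with the Chow group of the transcendental motive $\mathfrak{t}_2(S)$; a symplectic $\sigma$ acts trivially on $H^{2,0}$ and hence (by minimality of $H^2_{\mathrm{tr}}$ as a Hodge substructure) on all of $H^2_{\mathrm{tr}}(S,\QQ)$; Kimura's nilpotence theorem then makes $p_{\mathfrak{t}_2}\circ(\Gamma_\sigma-\Delta_S)\circ p_{\mathfrak{t}_2}$ nilpotent, while the relation $\sigma^n=\id$ forces semisimplicity, and the two together give triviality on $\CH_0(S)_{\alb,\QQ}$. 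The appeal to Roitman's theorem (which the paper itself uses elsewhere, see the proof of Lemma~\ref{lem: Z22 acts}) to remove the $\QQ$-coefficients is also correct. One minor comment: you do not really need Maschke's theorem or any abstract semisimplicity statement in $\End(\mathfrak{t}_2(S))$; it suffices to note that if $e$ is nilpotent and $(1+e)^n=1$ in a $\QQ$-algebra, then expanding gives $e\cdot(n+\binom{n}{2}e+\cdots)=0$ with the second factor a unit, whence $e=0$.
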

% \begin{proof}
%     If $p_g(S)=0$, then finite-dimensionality of $\mathfrak{h}(S)$ is equivalent to $\CH_0(S)_{\alb}=0$, and there is nothing to prove in this case. If $p_g(S)>0$, this is \cite[Lemma~5.9]{DL23}.
% \end{proof}
In particular, one has 
\begin{cor}\label{cor: isotrivial}
	Let $f\colon S\to B$ be an isotrivially fibered surface. Then any finite order symplectic automorphism of $S$ acts trivially on $\CH_{0}(S)_{\alb}$.
\end{cor}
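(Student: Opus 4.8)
This is an immediate consequence of Lemma~\ref{lem:fdChow}: a finite order symplectic automorphism of $S$ acts trivially on $\CH_0(S)_{\alb}$ as soon as the rational Chow motive $\gh(S)$ is finite dimensional in the sense of Kimura (\cite{Kim05}) and O'Sullivan. So the whole point is to check that an isotrivially fibered surface has finite dimensional Chow motive; this is the parenthetical assertion in the last bullet of the list in the introduction, and the plan is to spell it out.

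First I would use isotriviality to pass to a product of curves. Since $f\colon S\to B$ is isotrivial, the classifying map of the smooth locus of $f$ to the moduli stack of its general fiber $F$ is constant, so $f$ trivializes after a finite base change: there is a finite surjective morphism $\tB\to B$ from an irreducible smooth projective curve such that the normalization $S'$ of $S\times_B\tB$ is birational to the product $\tB\times F$. Moreover $S'$ carries a finite surjective morphism $S'\to S$, obtained by composing the normalization map with the first projection $S\times_B\tB\to S$. Choosing a resolution of singularities $\tS\to S'$, we obtain a smooth projective surface $\tS$ which is birational to $\tB\times F$ and admits a surjective, generically finite morphism $p\colon\tS\to S$.

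Then I would run the standard motivic bookkeeping. By Kimura's theorem a smooth projective curve has finite dimensional Chow motive, and the class of finite dimensional motives is closed under direct sums, tensor products and direct summands; hence $\gh(\tB\times F)\cong\gh(\tB)\otimes\gh(F)$ is finite dimensional. Finite dimensionality is a birational invariant among smooth projective surfaces: two birational smooth projective surfaces admit a common smooth projective model obtained from each by a finite sequence of point blow-ups, and blowing up a point on a surface only adds a Lefschetz (hence finite dimensional) direct summand to the motive. Thus $\gh(\tS)$ is finite dimensional. Finally, since $p\colon\tS\to S$ is a surjective morphism of smooth projective surfaces of the same dimension, one has ${}^{t}\Gamma_p\circ\Gamma_p=(\deg p)\,\Delta_S$ as rational self-correspondences of $S$, exhibiting $\gh(S)$ as a direct summand of $\gh(\tS)$; hence $\gh(S)$ is finite dimensional, and Lemma~\ref{lem:fdChow} applies.

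The only points needing care are the classical reduction of an isotrivial fibration to a product after a finite base change, together with the accompanying normalization and resolution manipulations, and the (equally classical) facts that finite dimensionality is inherited by direct summands and is a birational invariant of surfaces; neither is hard, but both should be invoked precisely. Throughout, ``finite dimensional'' is meant with $\QQ$-coefficients, which is exactly the hypothesis of Lemma~\ref{lem:fdChow}.
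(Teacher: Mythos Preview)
Your proposal is correct and follows essentially the same approach as the paper: reduce to Lemma~\ref{lem:fdChow} by showing that an isotrivially fibered surface is dominated by a product of curves $\tB\times F$ and hence has finite dimensional Chow motive by Kimura. The paper states this in two lines (``$S$ is birational to $(\tB\times F)/G$, hence dominated by $\tB\times F$''), whereas you spell out the base change, normalization, resolution, and the motivic closure properties in detail; the substance is identical.
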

\begin{proof}
Since $f$ is an isotrivial fibration, $S$ is birational to $(\tB\times F)/G$, where $\tB$ is a smooth projective curve dominating $B$ and $F$ is a general fiber of $f$. Therefore, $S$ is dominated by $\tB\times F$ and hence has finite dimensional Chow motive by \cite{Kim05}. Now apply Lemma~\ref{lem:fdChow}.
\end{proof}

\begin{cor}\label{cor: q_f max}
	Let $f\colon S\to B$ be a fibered surface of genus $g$ such that $q_f=g$. Then any finite order symplectic automorphism of $S$ acts trivially on $\CH_{0}(S)_{\alb}$.
\end{cor}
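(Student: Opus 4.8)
The plan is to reduce the statement to Corollary~\ref{cor: isotrivial}: I will show that $q_f=g$ forces $f$ to be isotrivial, and then Corollary~\ref{cor: isotrivial} gives the conclusion verbatim. Since $q_f=q(S)-g(B)$ and isotriviality are both unaffected if $S$ is replaced by a smooth birational model and $f$ by the induced fibration, I may assume $S$ smooth; let $B^{\circ}\subseteq B$ be the dense open locus over which $f$ is smooth and let $F=f^{-1}(b)$ with $b\in B^{\circ}$ general. The implication ``$q_f=g\Rightarrow f$ isotrivial'' is classical in the theory of the relative irregularity of fibred surfaces; here is one way to see it.

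\emph{Step 1: the Jacobian of a general fibre is rigid.} Recall the standard fact that for a general fibre $F$ the restriction map $H^0(S,\Omega_S^1)\to H^0(F,\Omega_F^1)$ has image of dimension $q(S)-g(B)=q_f$ (its kernel being essentially the $1$-forms pulled back from $B$). When $q_f=g=h^0(F,\Omega_F^1)$ this restriction is therefore surjective, so the dual map of tangent spaces at the origin is injective; hence the homomorphism $\Jac(F)=\Alb(F)\to\Alb(S)$ induced by $F\hookrightarrow S$ has finite kernel, i.e.\ is an isogeny onto its image. Since the composition $F\hookrightarrow S\xrightarrow{\,f\,}B$ is constant, that image is contained in $K$, the identity component of the kernel of the surjection $\Alb(S)\to\Alb(B)$ induced by $f$; and $K$ is an abelian variety of dimension $q(S)-g(B)=q_f=g$. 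A $g$-dimensional abelian subvariety of the $g$-dimensional $K$ must be all of $K$, so $\Jac(F)$ is isogenous to the \emph{fixed} abelian variety $K$ for every general fibre $F$.

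\emph{Step 2: $f$ is isotrivial, and conclusion.} Suppose first $g\geq 2$. By the Torelli theorem an abelian variety is the Jacobian of at most one curve up to isomorphism, and the isogeny class of $K$ contains only countably many abelian varieties; hence $\{[C]\in\sM_g:\Jac(C)\ \text{is isogenous to}\ K\}$ is a countable subset of $\sM_g$. By Step~1 the moduli morphism $\mu\colon B\to\overline{\sM}_g$ sends the irreducible curve $B$ into this countable set, so $\mu$ is constant because $\CC$ is uncountable; thus $f$ is isotrivial. For $g=1$ the same argument applies with $\sM_1$ the affine $j$-line, and for $g=0$ the fibration is isotrivial for trivial reasons. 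In all cases, Corollary~\ref{cor: isotrivial} applied to the now-isotrivial fibration $f$ shows that every finite order symplectic automorphism of $S$ acts trivially on $\CH_0(S)_{\alb}$.

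The step I expect to be the main obstacle is Step~2: turning ``all general fibres have isogenous Jacobians'' into genuine isotriviality needs some care with the countability (Baire-type) argument on $\overline{\sM}_g$, the small-genus cases require separate treatment, and the singular fibres must be absorbed by restricting to $B^{\circ}$ (isotriviality being detected over a dense open subset of $B$). By contrast Step~1 is routine: the equality $\dim\operatorname{im}\bigl(H^0(S,\Omega_S^1)\to H^0(F,\Omega_F^1)\bigr)=q_f$ is standard, and the remainder is bookkeeping with Albanese varieties.
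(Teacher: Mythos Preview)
Your strategy is exactly the paper's: show that $q_f=g$ forces $f$ to be isotrivial and then invoke Corollary~\ref{cor: isotrivial}. The paper's proof is one sentence, citing Beauville \cite{Bea82} for this implication (in fact for the stronger statement that $q_f=g$ makes $f$ birationally a product), whereas you reprove it by hand.

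Your direct argument is essentially sound, but Step~2 has a small gap worth flagging. The Torelli theorem identifies a curve from its Jacobian \emph{as a principally polarized abelian variety}, not as a bare abelian variety; so from ``$\Jac(F)$ is isogenous to the fixed $K$'' you need, in addition to the countability of the isogeny class of $K$, the fact that a given abelian variety carries only finitely many principal polarizations (Narasimhan--Nori) before you can conclude that the locus $\{[C]\in\sM_g:\Jac(C)\text{ isogenous to }K\}$ is countable. With that remark your countability/irreducibility argument on $B^{\circ}\to\sM_g$ goes through. The paper sidesteps all of this by citing \cite{Bea82}; your version is more self-contained but correspondingly longer and leans on several nontrivial auxiliary facts.
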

\begin{proof}
Since $g=q_f$, the fibration $f$ is isotrivial by \cite{Bea82}, and hence the assertion follows from Corollary~\ref{cor: isotrivial}.
\end{proof}

If a surface $S$ has several commuting involutions, then we have the following criterion for the triviality of the $\Aut_s(S)$-action on the Albanese kernel.
\begin{lemma}\label{lem: Z22 acts}
Let S be a smooth projective surface. Let $G=\langle \sigma,\tau\rangle\cong (\ZZ/2\ZZ)^2$ be a subgroup of $\Aut(S)$. Suppose that $\CH_0(S/\langle\tau\rangle)_{\alb}=0$. Then $\sigma$ acts trivially on $\CH_0(S)_{\alb}$ if and only if $\CH_0(S/\langle\sigma\tau\rangle)_{\alb}=0$.
\end{lemma}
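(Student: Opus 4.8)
The plan is to decompose $\CH_0(S)_{\alb,\QQ}$ into eigenspaces under the $G$-action and match the relevant eigenspaces with the Albanese kernels of the three intermediate quotients. Since $G\cong(\ZZ/2\ZZ)^2$ is a finite abelian group acting on the $\QQ$-vector space $V:=\CH_0(S)_{\alb,\QQ}$, it splits as a direct sum of the four isotypic pieces indexed by the characters of $G$. Writing $V^+$ for the $G$-invariant part and $V_\sigma, V_\tau, V_{\sigma\tau}$ for the subspaces on which $G$ acts through the three nontrivial characters (so that, e.g., $V_\tau$ is the part fixed by $\tau$ but negated by $\sigma$), one gets
\[
V = V^+ \oplus V_\sigma \oplus V_\tau \oplus V_{\sigma\tau}.
\]
By Lemma~\ref{lem: CH inv} applied to the three order-two subgroups $\langle\tau\rangle$, $\langle\sigma\rangle$, $\langle\sigma\tau\rangle$, the invariant part $V^{\langle\tau\rangle} = V^+\oplus V_\tau$ is isomorphic to $\CH_0(S/\langle\tau\rangle)_{\alb,\QQ}$, and similarly for the other two. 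The hypothesis $\CH_0(S/\langle\tau\rangle)_{\alb}=0$ (which is already a $\ZZ$-statement, so a fortiori $V^+\oplus V_\tau = 0$) forces $V^+=0$ and $V_\tau=0$.

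With $V^+ = V_\tau = 0$ the decomposition collapses to $V = V_\sigma \oplus V_{\sigma\tau}$. Now $\sigma$ acts trivially on $\CH_0(S)_{\alb}$ if and only if it acts trivially on $V = \CH_0(S)_{\alb,\QQ}$ — here I would note that $\CH_0(S)_{\alb}$ is a torsion-free group (it sits inside $\CH_0(S)_{\hom}$, whose torsion is controlled and lives in the Albanese, or more simply one invokes Roitman's theorem that $\alb_S$ is injective on torsion, so $\CH_0(S)_{\alb}$ is torsion-free), hence $\CH_0(S)_{\alb}\hookrightarrow \CH_0(S)_{\alb,\QQ}$ and triviality of the $\sigma$-action upstairs is equivalent to triviality downstairs. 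On $V_\sigma$ the automorphism $\sigma$ acts as $+1$ and on $V_{\sigma\tau}$ it acts as $-1$. Therefore $\sigma$ acts trivially on $V$ if and only if $V_{\sigma\tau} = 0$. But $V^{\langle\sigma\tau\rangle} = V^+ \oplus V_{\sigma\tau} = V_{\sigma\tau}$ (since $V^+=0$), and by Lemma~\ref{lem: CH inv} this equals $\CH_0(S/\langle\sigma\tau\rangle)_{\alb,\QQ}$. Hence $V_{\sigma\tau}=0$ iff $\CH_0(S/\langle\sigma\tau\rangle)_{\alb,\QQ}=0$ iff $\CH_0(S/\langle\sigma\tau\rangle)_{\alb}=0$ (again using torsion-freeness of the Albanese kernel of the quotient, or its rational-singularities smooth model). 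Combining, $\sigma$ acts trivially on $\CH_0(S)_{\alb}$ $\iff$ $\CH_0(S/\langle\sigma\tau\rangle)_{\alb}=0$, which is the claim.

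The only genuinely delicate point is the passage between integral and rational statements: Lemma~\ref{lem: CH inv} is phrased with $\QQ$-coefficients, so I must justify that for a smooth projective surface the Albanese kernel has no torsion, and likewise for the (rational-singularities model of the) quotient surfaces, so that vanishing with $\QQ$-coefficients is equivalent to vanishing with $\ZZ$-coefficients. This is standard (Roitman's theorem on torsion $0$-cycles), but it is worth spelling out explicitly since the statement of the lemma is integral. Everything else is elementary representation theory of $(\ZZ/2\ZZ)^2$ over $\QQ$ together with two invocations of Lemma~\ref{lem: CH inv}; I do not anticipate any obstacle there.
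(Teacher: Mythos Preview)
Your proposal is correct and follows essentially the same approach as the paper: pass to $\QQ$-coefficients via Roitman's torsion-freeness, use Lemma~\ref{lem: CH inv} to identify invariant subspaces with Albanese kernels of quotients, and analyze the $(\ZZ/2\ZZ)^2$-eigenspace decomposition. The paper's version is slightly more compressed---rather than writing out all four isotypic pieces, it observes directly that the hypothesis forces $\tau$ to act as $-1$ on $\CH_0(S)_{\alb,\QQ}$, whence $\sigma=\id$ iff $\sigma\tau=-\id$ iff the $\langle\sigma\tau\rangle$-invariants vanish---but this is just your argument with the bookkeeping suppressed.
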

\begin{proof}
Note that $\CH_0(S)_{\alb}$ has no torsion (\cite{Roj80}), we may work on the Albanese kernels with $\QQ$-coefficients. Since $\CH_0(S/\langle\tau\rangle)_{\alb}=0$, $\tau$ acts as $-1$ on $\CH_0(S/\langle\tau\rangle)_{\alb,\QQ}$. Thus $\sigma$ acts as identity on $\CH_0(S)_{\alb,\QQ}$ if and only if $\sigma\tau$ acts as $-1$ on $\CH_0(S)_{\alb,\QQ}$, which is equivalent to the vanishing of the $(\sigma\tau)$-invariant part $\CH_0(S)_{\alb, \QQ}^G \cong \CH_0(S/\langle\sigma\tau\rangle)_{\alb,\QQ}$.
\end{proof}

If a surface  with vanishing geometric genus is not of general type, then it has trivial Albanese kernel (\cite{BKL76}). Thus we may obtain from Lemma~\ref{lem: Z22 acts} the following statement.
\begin{cor}[{\cite[Lemma~5.1]{DL23}}]\label{cor: Z22 acts}
Let S be a smooth projective surface. Let $G=\langle \sigma,\tau\rangle\cong (\ZZ/2\ZZ)^2$ be a subgroup of $\Aut(S)$ such that (the smooth models of) the quotient surfaces $S/\langle\tau\rangle$ and $S/\langle\sigma\tau\rangle$ are not of general type and have vanishing geometric genus. Then $\sigma$ acts trivially on $\CH_0(S)_{\alb}$.
\end{cor}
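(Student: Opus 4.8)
The plan is to obtain this as a formal consequence of Lemma~\ref{lem: Z22 acts}, with the two vanishing hypotheses that the lemma requires being supplied by the theorem of Bloch--Kas--Lieberman \cite{BKL76}. First I would recall that input in the precise form needed: a smooth projective surface with $p_g = 0$ that is not of general type has trivial Albanese kernel. (This covers Kodaira dimensions $-\infty$, $0$ and $1$; the only open case, $p_g = 0$ of general type, is precisely Bloch's Conjecture~\ref{conj: pg=0}, which is not needed here.)

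Next I would apply this to a smooth model $\tZ$ of the quotient surface $S/\langle\tau\rangle$. By hypothesis $\tZ$ has $p_g = 0$ and is not of general type, so $\CH_0(\tZ)_{\alb} = 0$; since $S/\langle\tau\rangle$ has only quotient singularities, which are rational, the birational invariance $\CH_0(\tZ)_{\alb}\cong\CH_0(S/\langle\tau\rangle)_{\alb}$ recorded in Section~\ref{sec: prelim} gives $\CH_0(S/\langle\tau\rangle)_{\alb} = 0$. Running the same argument for $S/\langle\sigma\tau\rangle$ yields $\CH_0(S/\langle\sigma\tau\rangle)_{\alb} = 0$. With both vanishings in hand, Lemma~\ref{lem: Z22 acts} applied to the pair $(\sigma,\tau)$ asserts that $\sigma$ acts trivially on $\CH_0(S)_{\alb}$ if and only if $\CH_0(S/\langle\sigma\tau\rangle)_{\alb} = 0$ --- which has just been proved --- so $\sigma$ indeed acts trivially on $\CH_0(S)_{\alb}$, as claimed.

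I do not expect any genuine obstacle: once Lemma~\ref{lem: Z22 acts} and \cite{BKL76} are granted, the statement is essentially bookkeeping. The one point calling for a little care is the passage between the merely normal quotients $S/\langle\tau\rangle$, $S/\langle\sigma\tau\rangle$ and their smooth models, so that the hypothesis ``not of general type with $p_g = 0$'' can legitimately be combined with \cite{BKL76}; this is exactly what the birational invariance of the Albanese kernel for surfaces with rational singularities takes care of. It is also worth keeping in mind the standing assumptions already used in Lemma~\ref{lem: Z22 acts} --- that $\sigma$ and $\tau$ (hence $\sigma\tau$) are involutions generating a copy of $(\ZZ/2\ZZ)^2$, and that $\CH_0(S)_{\alb}$ is torsion-free by \cite{Roj80} --- which are what make the eigenspace decomposition underpinning that lemma valid.
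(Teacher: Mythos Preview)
Your proposal is correct and follows exactly the route the paper takes: the paper deduces the corollary in one sentence from Lemma~\ref{lem: Z22 acts} together with \cite{BKL76}, observing that a surface with $p_g=0$ that is not of general type has trivial Albanese kernel, which supplies both vanishing hypotheses of the lemma. Your added remark about passing between the normal quotients and their smooth models via rational singularities is a helpful elaboration of a point the paper leaves implicit.
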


\section{Symplectic automorphisms of surfaces with a genus two fibration}\label{sec: g=2}

% \subsection{The canonical map}
% Suppose that $S$ is a smooth projective surface with $p_g(S)>0$. Then the canonical system $|K_S|$ defines a rational map $\phi_{K_S}\colon S\to\PP^{p_g(S)-1}$, called the \emph{canonical map} of $S$. 
% If $\phi_{K_S}$ is generically finite onto its image $\Sigma:=\im(\phi_{K_S})$, then one can obtain a rough bound on $\deg(\phi_{K_S})$:
% \begin{equation*}
%     \deg(\phi_{K_S})\leq \frac{K_S^2}{\deg(\Sigma)}\leq \frac{9\chi(\sO_S)}{p_g(S)-2}.
% \end{equation*}
%While, it turns out  that the existence of fibrations of genus two on $S$ yields restrictions on its  geometry. More precisely, in the terms of canonical degrees, Xiao proved that
% \begin{prop}(\cite[Proposition~5.2]{Xiao85a})\label{Xiao_degree}
% Let $S$ be a smooth projective surface with a genus two fibration. Suppose that $\phi_{K_S}$ is generically finite onto its image. Then the following holds.
% \begin{itemize}
%     \item[1)]  $\deg(\phi_{K_S})$ is even.
%     \item[2)]   if $\chi(\sO_S)\geq 4$, then $\deg(\phi_{K_S})\leq 4$.
%     \item[3)]  if $\deg(\phi_{K_S})= 4$, then $p_g(S)\leq 2b+2$, and the image surface of $\phi_{K_S}$ is either a rational surface or a ruled surface over an elliptic base.
% \end{itemize}
% \end{prop}

For a smooth projective surface $S$ with $p_g(S)>0$, one may use the canonical system $|K_S|$ to define a rational map $\phi_{K_S}\colon S\dashrightarrow\PP^{p_g(S)-1}$, called the \emph{canonical map} of $S$. This map is a main tool in our study of symplectic automorphisms of surfaces with a genus two fibration, due to the following observation.
\begin{lem}\label{lem: factor thru S/G}
Let $S$ be a smooth projective surface of general type with a hyperelliptic fibration $f\colon S\rightarrow B$, and $G\subset \Aut(S)$ the subgroup generated by the hyperelliptic involution $\tau\in \Aut_B(S)$ together with the symplectic automorphism group $\Aut_s(S)$. Suppose that $p_g:=p_g(S)>0$, so that the canonical map $\phi_{K_S}\colon S\dashrightarrow \PP^{p_g-1}$ is well-defined. Then $\phi_{K_S}$ factors through the quotient map $\pi\colon S\rightarrow S/G$, that is, there is a rational map $\varphi\colon S/G\dashrightarrow  \PP^{p_g-1}$ such that $\phi_{K_S} = \varphi\circ\pi$.
\end{lem}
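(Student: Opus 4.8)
The plan is to show that $\phi_{K_S}$ is constant on the $G$-orbits, which by the universal property of the quotient will yield the factorization. Since $G$ is generated by $\tau$ and by $\Aut_s(S)$, it suffices to check that every holomorphic section of $\omega_S$ is $G$-invariant up to a common scalar, i.e.\ that $G$ acts on the line $\CC$ generated by each canonical form compatibly, or more precisely that the action of $G$ on $\PP(H^0(S,\omega_S))$ fixes every point in the image of $\phi_{K_S}$. The key point is that $G$ acts trivially on $\PP H^0(S,\omega_S)$: for $\sigma\in\Aut_s(S)$ this is by definition, since $\sigma$ acts trivially on $H^0(S,K_S)=H^{2,0}(S)$; and for the hyperelliptic involution $\tau$ we argue via the genus two fibration as below.

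\textbf{Key steps.} First I would recall that for a genus two fibration $f\colon S\to B$, the relative canonical sheaf restricted to a general fiber $F$ is $\omega_F$, and the hyperelliptic involution $\tau$ on $S$ restricts to the hyperelliptic involution on each fiber $F$. On a genus two curve $F$, the hyperelliptic involution acts as $-1$ on $H^0(F,\omega_F)$. I would then transfer this to $S$: the natural inclusion $f^*f_*\omega_{S/B}\hookrightarrow\omega_{S/B}$ together with $\omega_S=\omega_{S/B}\otimes f^*\omega_B$ shows that $H^0(S,\omega_S)$ maps into (in fact, because $f_*\omega_S$ computes $H^0$, equals) the global sections of $f_*\omega_{S/B}\otimes\omega_B$, and $\tau$ acts as $-1$ on $f_*\omega_{S/B}$ since it does so fiberwise. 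Hence $\tau$ acts as $-1$ on all of $H^0(S,\omega_S)$, and therefore acts trivially on $\PP H^0(S,\omega_S)$. Combining, every $g\in G$ acts on $\PP^{p_g-1}$ so that the composite $\PP H^0(S,\omega_S)^\vee\dashleftarrow S\xrightarrow{g} S$ agrees with $\phi_{K_S}$ after the projective action — but the projective action is trivial on the image, so $\phi_{K_S}\circ g=\phi_{K_S}$ as rational maps.

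\textbf{Conclusion from $G$-invariance.} Once I know $\phi_{K_S}\circ g=\phi_{K_S}$ for all $g\in G$ (as rational maps $S\dashrightarrow\PP^{p_g-1}$), the factorization is formal: resolving the indeterminacy of $\phi_{K_S}$ if necessary and using that $S\to S/G$ is a finite morphism with $S/G$ normal, a $G$-invariant rational map to a projective space descends to a rational map $\varphi\colon S/G\dashrightarrow\PP^{p_g-1}$ with $\phi_{K_S}=\varphi\circ\pi$. Concretely, on the Zariski-open locus $U\subset S$ where $\phi_{K_S}$ is defined, $G$-invariance means $\phi_{K_S}$ factors set-theoretically through $U/G$, hence through $\pi(U)\subset S/G$, and the resulting map is a morphism there because $\pi$ is finite and $S/G$ is normal; this defines $\varphi$ as a rational map on $S/G$.

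\textbf{Main obstacle.} The only genuinely substantive point is establishing that $\tau$ acts as $-1$ on $H^0(S,\omega_S)$ — equivalently, that no nonzero canonical form on $S$ is $\tau$-invariant. The clean way is the fiberwise argument through $f_*\omega_{S/B}$ sketched above, using that $\tau$ is a fiber-preserving involution whose restriction to a general genus two fiber is the hyperelliptic involution, which acts by $-1$ on the $2$-dimensional space of abelian differentials; some care is needed to handle singular fibers and the base change/pushforward formalism, but this is standard for genus two fibrations (and is exactly the input from Xiao's work that the paper is relying on elsewhere). Everything else — the definition of $\Aut_s(S)$ giving $\sigma$-invariance, and the descent of a $G$-invariant rational map through a finite quotient — is routine.
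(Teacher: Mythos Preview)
Your proposal is correct and follows essentially the same approach as the paper: both argue that $\Aut_s(S)$ acts trivially on $H^0(S,K_S)$ by definition while $\tau$ acts as $-1$, so $G$ acts trivially on $\PP^{p_g-1}$ and the factorization follows. The paper's proof is a two-line assertion of exactly these facts, whereas you spell out the fiberwise reason for $\tau^*=-\id$ on $H^0(S,K_S)$ and the descent argument; the only minor mismatch is that the lemma is stated for an arbitrary hyperelliptic fibration, so your fiberwise argument should be phrased for a hyperelliptic curve of genus $g\geq 2$ rather than specifically $g=2$ (the hyperelliptic involution still acts as $-1$ on $H^0(F,\omega_F)$).
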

\begin{proof}
Since $\tau$ acts as $-1$ on $H^0(S, K_S)$ while $\Aut_s(S)$ acts trivially on $H^0(S, K_S)$, $\phi_{K_S}$ is a $G$-equivariant map with $G$ acting trivially on the target. The assertion of the lemma follows.
\end{proof}

Before recalling the fundamental results of Xiao on the canonical map of a surface of general type with a genus two fibration, we need to introduce some notation first. 
\begin{nota}\label{nota: g=2 fibration}
Let $S$ be a surface of general type, and $f\colon S\rightarrow B$ a genus two fibration. We abbreviate the Euler characteristic $\chi(\sO_S)$, the geometric genus $p_g(S)$, the irregulaity $q(S)$, the genus $g(B)$ of the base curve $B$ by $\chi,\, p_g,\,q,\, b$ respectively. Let $M\subset f_*\omega_S$ be  an invertible subsheaf with maximal degree and  $e:=2\deg M - \deg f_*\omega_S$. Then we have (\cite[Proof of Theorem~2.1]{Xiao85}) 
\[
\deg f_*\omega_S = \chi + 3(b-1),
\]
and hence 
\begin{equation}\label{eq: deg M}
\deg M = \frac{1}{2}(e+\deg f_*\omega_S) = \frac{1}{2} (e +\chi + 3(b-1) ).
\end{equation}
As in Lemma~\ref{lem: factor thru S/G}, we will denote by $\tau$ the hyperelliptic involution of $f$, and let $G\subset \Aut(S)$ be the subgroup generated by $\Aut_s(S)$ and $\tau$. Note that $\tau$ acts as $-1$ on $H^0(S, K_S)$ and hence $\tau\notin\Aut_s(S)$ as soon as $p_g(S)>0$.
\end{nota}
\begin{thm}[{\cite[pages 71--73, Corollaire 1 and Proposition~5.2]{Xiao85}}]\label{thm: Xiao canonical map}
 Let $S$ be a smooth projective surface of general type with a genus two fibration $f\colon S\rightarrow B$, as in Notation~\ref{nota: g=2 fibration}. 
 \begin{enumerate}
     \item[(i)] If $p_g\geq 3$ and  the image of the canonical map $\phi_{K_S}$ is a curve, then $\phi_{K_S}$ factors through $f$, and there are three possibilities for the numerical invariants:
\begin{enumerate}
\item $q=b=0$ and $e=p_g$;
\item $b=0, q=1$ and $e=p_g+1$;
\item $q=b=1$ and $e=p_g$.
\end{enumerate}
\item[(ii)] If $\chi(\sO_S)\geq 4$ and  the image of the canonical map is a surface, then $\deg\phi_{K_S}\in\{2,4\}$. Moreover, if $\deg\phi_{K_S}=4$, then
\begin{enumerate}
\item 
$p_g(S)\leq 2b+2$;
\item $T:=\phi_{K_S}(S)$ is either a rational surface or a cone over an elliptic curve.
\end{enumerate}
 \end{enumerate}
\end{thm}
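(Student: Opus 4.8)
The plan is to reduce the determination of $\phi_{K_S}$ to the geometry of the \emph{relative canonical map} of $f$ and then to linear algebra on the base curve $B$. First I would record that, since $\omega_S=\omega_{S/B}\otimes f^*\omega_B$ and $R^1f_*\omega_{S/B}\cong\sO_B$ for a genus two fibration, one has $f_*\omega_S=f_*\omega_{S/B}\otimes\omega_B$, $H^0(S,\omega_S)=H^0(B,f_*\omega_S)$, and $\deg f_*\omega_S=\chi+3(b-1)$ as in Notation~\ref{nota: g=2 fibration}. Set $W:=\PP_B(f_*\omega_{S/B})$, with structure morphism $\pi\colon W\to B$ and tautological quotient $\sO_W(1)$, so that $\pi_*\sO_W(1)=f_*\omega_{S/B}$ and $\pi_*(\sO_W(1)\otimes\pi^*\omega_B)=f_*\omega_S$. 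On a general fiber of $f$ (a smooth genus two curve) the canonical map is the double cover of $\PP^1$, so the relative canonical map $\rho\colon S\dashrightarrow W$ is generically two-to-one onto $W$; and since $\phi_{K_S}$ is the map attached to $H^0(W,\sO_W(1)\otimes\pi^*\omega_B)$, it factors as $\phi_{K_S}=\psi\circ\rho$, where $\psi\colon W\dashrightarrow\PP^{p_g-1}$ is given by the complete system $|\sO_W(1)\otimes\pi^*\omega_B|$. Hence the image of $\phi_{K_S}$ is a curve precisely when $\psi$ contracts the ruling of $W$, and otherwise $\deg\phi_{K_S}=2\deg\psi$.

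For part~(i), assume $\psi$ contracts the ruling. Then all sections of $f_*\omega_S$ lie in a common saturated sub-line bundle, which I may take to be the maximal-degree sub-line bundle $M\subset f_*\omega_S$ of Notation~\ref{nota: g=2 fibration}; so $h^0(B,M)=h^0(B,f_*\omega_S)=p_g$ and $\phi_{K_S}$ is the composition $S\xrightarrow{f}B\xrightarrow{\phi_{|M|}}\PP^{p_g-1}$, that is, $\phi_{K_S}$ factors through $f$. To pin down the invariants I would run the numerics on $B$: $p_g=h^0(B,M)\geq3$ together with Clifford's theorem, the value $\deg M=\tfrac12(e+\chi+3(b-1))$, the relation $p_g=\chi+q-1$, and the positivity of $f_*\omega_{S/B}$ (nef, and a sum of nonnegative line bundles over a rational base) first forces $M$ to be non-special, whence $\deg M=p_g+b-1$ and $e=2\deg M-\deg f_*\omega_S=p_g+q-b$; then $\deg(f_*\omega_S/M)\geq0$ over an elliptic or higher genus base, the bounds $b\leq q\leq b+1$ (the case $q-b=2$ would make $f$ a product, impossible for $S$ of general type), and Xiao's slope inequality for a non-isotrivial genus two fibration of a surface of general type eliminate $b\geq2$ and leave exactly the possibilities (a), (b), (c); a boundary case in the Clifford step is discarded because the surface would then not be of general type.

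For part~(ii), $\psi$ is generically finite onto a nondegenerate surface $T=\psi(W)\subset\PP^{p_g-1}$. Writing $\psi^*H=\sO_W(1)\otimes\pi^*\omega_B-E$ with $H$ a hyperplane and $E\geq0$ the fixed part, a short computation gives $\deg\psi\cdot\deg T=(\psi^*H)^2\leq(\sO_W(1)\otimes\pi^*\omega_B)^2=\chi+3(b-1)$. Since a nondegenerate surface in $\PP^{p_g-1}$ has degree at least $p_g-2=\chi+q-3$, and $q\geq b$, the hypothesis $\chi\geq4$ yields $\deg\psi\leq\tfrac{\chi+3(b-1)}{\chi+q-3}<3$, hence $\deg\psi\in\{1,2\}$ and $\deg\phi_{K_S}=2\deg\psi\in\{2,4\}$. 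If $\deg\phi_{K_S}=4$, then $2(p_g-2)\leq\chi+3(b-1)$, which upon substituting $\chi=p_g-q+1$ reads $p_g\leq3b+2-q\leq2b+2$, giving (a). Finally, a general ruling $\ell$ of $W$ has $(\sO_W(1)\otimes\pi^*\omega_B)\cdot\ell=1$, so $\psi(\ell)$ is a line; thus $T$ is a nondegenerate surface swept out by a one-parameter family of lines, so by the classification of such surfaces it is a scroll or a cone over a smooth curve $\Gamma$ dominated by $B$, and the degree bound $\deg T\leq\tfrac12(\chi+3(b-1))$ together with $p_g\leq2b+2$ forces $g(\Gamma)\leq1$; that is, $T$ is rational or a cone over an elliptic curve, which is (b).

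The conceptual step — the factorization $\phi_{K_S}=\psi\circ\rho$ through the generically two-to-one relative canonical map — is immediate, and in part~(ii) the intersection-theoretic bound closes quickly once $\chi\geq4$ is used. The main obstacle will be the numerical analysis in part~(i) and the identification of $T$ in part~(ii): upgrading the raw Clifford and intersection inequalities to the sharp conclusions ``$b\leq1$'' and ``$T$ rational or a cone over an elliptic curve'' is exactly where one must feed in Xiao's structure theory of genus two fibrations — the splitting type and Harder--Narasimhan filtration of $f_*\omega_{S/B}$, the role of the invariant $e$, and the slope inequality $K_{S/B}^2\geq2\deg f_*\omega_{S/B}$ — without which the list of numerical cases does not close.
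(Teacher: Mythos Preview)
The paper does not give its own proof of this statement; Theorem~\ref{thm: Xiao canonical map} is simply quoted from Xiao's monograph \cite[pp.~71--73, Corollaire~1 and Proposition~5.2]{Xiao85}, so there is no argument in the paper to compare your proposal against.

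That said, your outline is essentially the route Xiao takes: factor $\phi_{K_S}$ through the relative canonical double cover $\rho\colon S\dashrightarrow W=\PP_B(f_*\omega_{S/B})$ and then study the map $\psi$ on $W$ defined by $|\sO_W(1)\otimes\pi^*\omega_B|$ via the structure of $f_*\omega_{S/B}$ and elementary intersection theory on the ruled surface. The degree bound in part~(ii) and the inequality $p_g\leq 2b+2$ fall out exactly as you describe.

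However, as you yourself concede in the final paragraph, the sketch is not self-contained. In part~(i) you invoke ``Xiao's slope inequality'' and ``Xiao's structure theory'' to close the list of numerical cases, which is circular when the goal is to prove Xiao's theorem; the actual work is a careful analysis of the splitting type (or Harder--Narasimhan filtration) of $f_*\omega_{S/B}$ together with Riemann--Roch on $B$, and your Clifford/positivity remarks do not by themselves exclude $b\geq 2$. In part~(ii), the assertion that a line-swept nondegenerate surface of the given degree must be rational or a cone over an elliptic curve does not follow from the single inequality you write; Xiao uses that when $\deg\psi=2$ the image $T$ has minimal degree $p_g-2$ and then appeals to the del~Pezzo--Bertini classification of surfaces of minimal degree, plus a separate argument to identify the elliptic-cone case. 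So what you have is an accurate roadmap to Xiao's proof rather than an independent argument.
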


Now we discuss $\Aut_s(S)$ for a fibered surface $f\colon S\rightarrow B$ of genus two according to the behavior of the canonical map, given by Theorem~\ref{thm: Xiao canonical map}.

\begin{prop}\label{prop: canonical map fibration}
Let $S$ be a smooth projective surface of general type with a genus two fibration $f\colon S\rightarrow B$ such that $p_g(S)\geq 3$. Suppose that the image of the canonical map $\phi_{K_S}$ is a curve. Then the following holds.
\begin{enumerate}
    \item[(i)] $\Aut_s(S)\subset\Aut_B(S)$, that is, each symplectic automorphism of $S$ is fiber-preserving.
    \item[(ii)] $\Aut_s(S)$ has order at most $2$, and it acts trivially on $\CH_0(S)_{\alb}$.
\end{enumerate} 
\end{prop}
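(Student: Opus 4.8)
The plan is to use Xiao's classification (Theorem~\ref{thm: Xiao canonical map}(i)) to describe the canonical map of $S$ explicitly, and then read off both statements. \emph{Step 1 — the canonical map is $f$ followed by an embedding.} By Theorem~\ref{thm: Xiao canonical map}(i), $\phi_{K_S}$ factors through $f$ and the invariants $(q,b,e)$ are as in one of the cases (a)--(c) there. Take $M\subseteq f_*\omega_S$ saturated of maximal degree, so $Q:=f_*\omega_S/M$ is invertible with $\deg Q=\deg f_*\omega_S-\deg M=\tfrac12\big(\chi+3(b-1)-e\big)$ by \eqref{eq: deg M}. Running this through the three cases gives $\deg Q=-1$ in case~(a), $\deg Q=-2$ in case~(b), and $\deg Q=0$ in case~(c), where moreover $Q\not\cong\sO_B$ (otherwise $h^0(f_*\omega_S)=p_g+1\ne p_g$). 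In every case $H^0(B,Q)=0$, so the inclusion induces an isomorphism $H^0(B,M)\cong H^0(B,f_*\omega_S)=H^0(S,\omega_S)$; and since $\deg M=p_g-1\ge 2$ with $B=\PP^1$ in cases~(a),(b), and $\deg M=p_g\ge 3$ with $B$ elliptic in case~(c), the complete system $|M|$ is very ample. Hence the subsheaf of $\omega_S$ generated by global sections is $f^*M$, so $\phi_{K_S}=\iota\circ f$ with $\iota:=\phi_{|M|}\colon B\hookrightarrow\PP^{p_g-1}$ a closed embedding; in particular the fibres of $\phi_{K_S}$ coincide with the fibres of $f$.

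\emph{Step 2 — part (i) and the bound in (ii).} Part~(i) is now immediate: for $\sigma\in\Aut_s(S)$ the map $\sigma^*$ is the identity on $H^0(S,\omega_S)$, so $\phi_{K_S}\circ\sigma=\phi_{K_S}$, and by Step~1 this forces $f\circ\sigma=f$, i.e.\ $\sigma\in\Aut_B(S)$. For the bound, restriction to a general fibre $F:=F_b$ embeds $\Aut_B(S)$, hence $\Aut_s(S)$, into $\Aut(F)$ with $F$ a genus-two curve (the fixed locus of any non-identity element is a proper subset of $S$). Let $W\subseteq H^0(F,\omega_F)$ be the image of the evaluation $H^0(S,\omega_S)=H^0(B,M)\to M|_b$; as $M$ is globally generated $W$ is $1$-dimensional, and since $\sigma^*$ is trivial on $H^0(S,\omega_S)$, each $\sigma\in\Aut_s(S)$ restricts to an automorphism of $F$ acting trivially on $W$. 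So $\Aut_s(S)|_F\subseteq H:=\{g\in\Aut(F):g|_W=\mathrm{id}\}$, a finite group, and it suffices to show $|H|\le 2$. The homomorphism $H\to\mathrm{PGL}_2(\CC)$ given by the action on $\PP\big(H^0(F,\omega_F)\big)$ has trivial kernel (a nontrivial element would be a multiple of the hyperelliptic involution $\tau|_F$, which acts as $-1$, not $\mathrm{id}$, on $W$) and its image fixes $[W]$; a finite subgroup of $\mathrm{PGL}_2(\CC)$ fixing a point is cyclic, so $H=\langle g_0\rangle$. If $n:=|H|\ge 2$ then $g_0$ is not a scalar on $H^0(F,\omega_F)$ (the only automorphism acting trivially there is $\mathrm{id}$, and the only one acting as $-1$ is $\tau|_F\notin H$), so its eigenvalues are $1$ on $W$ and a primitive $n$-th root of unity on a complementary line. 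Then $H^0(F,\omega_F)^{H}=W$ is $1$-dimensional, so $E':=F/H$ has genus $1$ and $F\to E'$ is a cyclic degree-$n$ cover with ramification $\deg R=2g(F)-2=2$. But a cyclic cover of an elliptic curve cannot be branched over exactly one point (abelianizing the relation in the punctured fundamental group forces the local monodromies to sum to $0$ in $\ZZ/n$), so it is branched over at least two points; and for $n\ge 3$ each branch point contributes $n-n/e\ge 2$ to $\deg R$, giving $\deg R\ge 4$ — a contradiction. Hence $n\le 2$.

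\emph{Step 3 — the $\CH_0$ statement.} If $\Aut_s(S)$ is trivial there is nothing to prove; otherwise $\Aut_s(S)=\langle\sigma\rangle$ for an involution $\sigma$. Since $\tau$ is central in $\Aut_B(S)$ (the hyperelliptic involution of a genus-two curve is central in its automorphism group) and $\tau\notin\Aut_s(S)$, the subgroup $G:=\langle\sigma,\tau\rangle\subset\Aut_B(S)$ is a Klein four-group. On $H^0(S,\omega_S)$ the involution $\tau$ acts as $-\mathrm{id}$ (it does so fibrewise, hence on $f_*\omega_S$), and $\sigma$ acts trivially, so $\sigma\tau$ acts as $-\mathrm{id}$; thus the geometric genera of $S/\langle\tau\rangle$ and $S/\langle\sigma\tau\rangle$ both vanish. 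On a general fibre $\sigma|_F$ has eigenvalues $1,-1$ on $H^0(F,\omega_F)$ (Step~2), so $\tau|_F$ and $\sigma\tau|_F$ have invariant subspaces of dimensions $0$ and $1$, whence $F/\langle\tau|_F\rangle\cong\PP^1$ and $F/\langle\sigma\tau|_F\rangle$ has genus $1$. Therefore $S/\langle\tau\rangle$ is birationally ruled over $B$ and $S/\langle\sigma\tau\rangle$ admits a genus-one fibration over $B$, so neither is of general type; Corollary~\ref{cor: Z22 acts} then gives that $\sigma$ acts trivially on $\CH_0(S)_{\alb}$.

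\emph{Main obstacle.} Step~1 and the residual parts of Steps~2--3 are bookkeeping with Xiao's classification and with Corollary~\ref{cor: Z22 acts}. The genuinely delicate point is the bound $|\Aut_s(S)|\le 2$ in (ii): converting the statement that every symplectic automorphism fixes a line of $H^0(F,\omega_F)$ pointwise into a contradiction with order $\ge 3$ needs the reduction to cyclic subgroups of $\mathrm{PGL}_2(\CC)$ together with the Riemann--Hurwitz count for cyclic covers of an elliptic curve; one should also be careful that $W$ really is the $1$-dimensional image of evaluation (this rests on $H^0(B,M)=H^0(S,\omega_S)$ from Step~1).
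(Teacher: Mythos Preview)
Your proof is correct and follows the same overall architecture as the paper's: Xiao's classification yields the factorisation $\phi_{K_S}=\iota\circ f$ with $\iota$ an embedding, whence part~(i); a Riemann--Hurwitz count on a general fibre gives the order bound in~(ii); and the $(\ZZ/2\ZZ)^2$-structure $\langle\sigma,\tau\rangle$ together with Corollary~\ref{cor: Z22 acts} handles the Chow statement. Step~1 and Step~3 match the paper almost verbatim (the paper packages Steps~2--3 as Lemma~\ref{lem: AutB(S)}).

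The one genuine difference is how you obtain $|\Aut_s(S)|\le 2$. The paper argues element by element: for any nontrivial $\sigma\in\Aut_s(S)\cap\Aut_B(S)$ the symplectic condition gives $p_g(S/\langle\sigma\rangle)=p_g(S)>0$, which forces $g(F/\langle\sigma\rangle)=1$ (it cannot be $0$), and then Riemann--Hurwitz for $F\to F/\langle\sigma\rangle$ gives $|\sigma|\le 2$. You instead isolate the line $W=M|_b\subset H^0(F,\omega_F)$ and work with the ambient group $H=\{g\in\Aut(F):g|_W=\id\}$: the injection $H\hookrightarrow\mathrm{PGL}_2(\CC)$ with a fixed point forces $H$ cyclic, the eigenvalue analysis gives $g(F/H)=1$, and Riemann--Hurwitz then bounds $|H|$ directly. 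Your route is slightly longer but has the merit of bounding the \emph{group} rather than only the orders of its elements --- the paper, as written, deduces $|\sigma|\le 2$ for every $\sigma$ but does not spell out why $\Aut_s(S)\cap\Aut_B(S)$ cannot then be $(\ZZ/2\ZZ)^2$ (a short character count on $H^0(F,\omega_F)$ would close this). Your explicit use of $W$ also feeds cleanly into Step~3, where the eigenvalues of $\sigma|_F$ are already known.
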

\begin{proof}
By Theorem~\ref{thm: Xiao canonical map} (i), $\phi_{K_S}$ factors through $f$. Therefore, the fibration $f$ is canonical, and every automorphism of $S$ preserves it. In other words, we have $\Aut(S)=\Aut_f(S)$.

By \eqref{eq: deg M}, we have
\[
\deg M = 
\begin{cases}
p_g - 1 & \text{ in cases (a) and (b) of Theorem~\ref{thm: Xiao canonical map} (i),} \\
p_g  & \text{ in case (c) of Theorem~\ref{thm: Xiao canonical map} (i).} 
\end{cases}
\]
In all cases, $\deg M\geq 2b +1$ and hence $M$ is a very ample invertible sheaf on $B$. Moreover, by counting dimensions, we have $H^0(B, M)\cong H^0(B, f_*\omega_S)\cong H^0(S, \omega_S)$, and hence the canonical map $\phi_{K_S}$ factors as
\[
\begin{tikzcd}
\phi_{K_S}\colon S\arrow[r, "f"] &  B \arrow[hook, r, "\phi_M"] &  \PP^{p_g-1}
\end{tikzcd}
\]
where $\phi_M$ is the embedding defined by the complete linear system $|M|$.

\medskip

Note that the morphisms $\phi_{K_S}$, $f$, and $\phi_M$ are all $G$-equivariant. Since $\Aut_s(S)$ acts trivially on $\PP^{p_g-1}$ and $B$ embeds into  $\PP^{p_g-1}$, the induced action of  $\Aut_s(S)$  on $B$ is trivial, and hence $\Aut_s(S) \subset \Aut_B(S)$. This proves (i).

\medskip

(ii) follows from (i) and the following Lemma~\ref{lem: AutB(S)}.
\end{proof}

\begin{lem}\label{lem: AutB(S)}
Let $S$ be a smooth projective surface, and $f\colon S\to B$ a fibration of genus two. If $p_g(S)>0$, then $\Aut_B(S)\cap \Aut_s(S)$ has order at most $2$, and it acts trivially on $\CH_0(S)_{\alb}$.
\end{lem}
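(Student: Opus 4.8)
The plan is to argue on a general fibre. Write $H_0:=\Aut_B(S)\cap\Aut_s(S)$, fix a general (hence smooth) fibre $F$ of $f$, and let $\rho\colon\Aut_B(S)\to\Aut(F)$, $\phi\mapsto\phi|_F$, be the restriction homomorphism. First I would record a few standard facts. The map $\rho$ is injective: the automorphisms of the smooth fibres form a finite \'etale cover of the smooth locus $B^{\circ}\subseteq B$, each $\phi\in\Aut_B(S)$ is a section of it, and a section over the connected base $B^{\circ}$ is determined by its value at one point; in particular every element of $H_0$ has finite order. The hyperelliptic involution $\tau$ lies in $\Aut_B(S)$ and restricts on every fibre to the central hyperelliptic involution $\iota_F\in\Aut(F)$, so $\tau$ is central in $\Aut_B(S)$. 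The group $\Aut(F)$ acts faithfully on $H^0(F,\omega_F)$, with $\iota_F$ acting by $-1$ (standard for genus $\ge 2$; immediate from a Weierstrass model for genus two). Finally, $p_g(S)>0$ forces $\tau\notin\Aut_s(S)$, so by injectivity of $\rho$ no nontrivial element of $H_0$ restricts to $\iota_F$; equivalently $H_0\cap\langle\tau\rangle=\{\id\}$.

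Next I would determine $\sigma|_F$ for a nontrivial $\sigma\in H_0$. The automorphism $\sigma$ acts $\sO_B$-linearly on $\sE:=f_*\omega_{S/B}$ (locally free of rank two, with $\sE\otimes k(b)=H^0(F_b,\omega_{F_b})$) and trivially on $H^0(B,\sE\otimes\omega_B)=H^0(S,\omega_S)\ne 0$. Decomposing $\sE$ into $\sigma$-eigensheaves, the invariant part is nonzero, and it must be of rank one: otherwise $\sigma$ would act trivially on $\sE$, hence $\sigma|_F$ trivially on $H^0(F,\omega_F)$, hence $\sigma=\id$. Thus $\sE=\sE_1\oplus\sE_{\zeta}$ with $\sE_1,\sE_{\zeta}$ of rank one, $\sigma$ acting by $1$ on $\sE_1$ and by a root of unity $\zeta$ of order $|\sigma|$ on $\sE_{\zeta}$. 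Restricting to $F$, the operator $\sigma|_F$ on $H^0(F,\omega_F)$ has eigenvalues $1,\zeta$, so $\dim H^0(F,\omega_F)^{\langle\sigma|_F\rangle}=1$, i.e. $F/\langle\sigma|_F\rangle$ has genus one. Riemann--Hurwitz for $F\to F/\langle\sigma|_F\rangle$ then reads $\sum_{P}(e_P-1)=2$, which forces $\sigma|_F$, and hence $\sigma$, to have order at most four (each branch point already contributes at least $|\sigma|/2$ to this sum).

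The crux is to exclude orders three and four. Here I bring in the hyperelliptic structure: $\sigma|_F$ commutes with $\iota_F$, so it induces $\overline{\sigma|_F}\in\mathrm{PGL}_2$ on the target $\PP^1$ of the canonical ($=$ hyperelliptic) map of $F$, permuting the six branch (Weierstrass) points; and $\langle\sigma|_F\rangle\cap\langle\iota_F\rangle=\{\id\}$ (using the last fact of the first paragraph, applied also to $\sigma^2$ when $|\sigma|=4$), so $\overline{\sigma|_F}$ has the same order as $\sigma|_F$. An element of $\mathrm{PGL}_2$ of order three or four is a rotation whose orbits on $\PP^1$ have size $1$ (its two fixed points) or the full order, with no orbit of size two; hence a six-element invariant set splits as $3+3$ (order three) or $4+1+1$ (order four). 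In the order-four case both fixed points are branch points, so the unique points of $F$ above them are Weierstrass points fixed by $\sigma|_F$, whereas Riemann--Hurwitz with quotient genus one leaves $\sigma|_F$ without fixed points --- a contradiction. In the order-three case neither fixed point is a branch point, so $\sigma|_F$ fixes all four points of $F$ above them, whereas Riemann--Hurwitz with quotient genus one allows $\sigma|_F$ exactly one fixed point --- again a contradiction. Hence every nontrivial element of $H_0$ has order two. If moreover $|H_0|\ge 4$, choose $\sigma,\sigma'\in H_0$ with $\langle\sigma,\sigma'\rangle\cong(\ZZ/2\ZZ)^2$; then $\langle\rho(\sigma),\rho(\sigma'),\iota_F\rangle\cong(\ZZ/2\ZZ)^3$ embeds into $\Aut(F)\hookrightarrow\mathrm{GL}(H^0(F,\omega_F))\cong\mathrm{GL}_2(\CC)$, impossible since a finite abelian subgroup of $\mathrm{GL}_2(\CC)$ is diagonalisable and one of exponent two embeds into $(\ZZ/2\ZZ)^2$. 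Therefore $|H_0|\le 2$.

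It remains to treat the action on $\CH_0(S)_{\alb}$. If $H_0=\{\id\}$ there is nothing to prove; otherwise $H_0=\langle\sigma\rangle$ with $\sigma$ an involution, and $\langle\sigma,\tau\rangle\cong(\ZZ/2\ZZ)^2$ since $\tau$ is central and $\sigma\ne\tau$. As $\tau,\sigma\tau\in\Aut_B(S)$, the fibration $f$ descends to fibrations $S/\langle\tau\rangle\to B$ and $S/\langle\sigma\tau\rangle\to B$ whose general fibres are $F/\langle\iota_F\rangle\cong\PP^1$, respectively $F/\langle\sigma|_F\,\iota_F\rangle$ of genus $\le 1$ (note $\sigma|_F\ne\iota_F$, so $\sigma|_F\,\iota_F$ is a nontrivial involution of the genus-two curve $F$); in particular both quotient surfaces carry a fibration with fibres of genus $\le 1$, so they are not of general type, and their geometric genus vanishes because $\tau$ and $\sigma\tau$ act by $-1$ on $H^0(S,\omega_S)$. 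Corollary~\ref{cor: Z22 acts} then yields that $\sigma$, and so all of $H_0$, acts trivially on $\CH_0(S)_{\alb}$. The main obstacle is the elimination of orders three and four: it is there that the symplectic hypothesis --- which via the eigenvalue $1$ pins the quotient genus to one --- must be played off against Riemann--Hurwitz and the rigid combinatorics of the six Weierstrass points under a rotation of $\PP^1$, and one must carefully distinguish $\sigma|_F\in\Aut(F)$ from its image in $\mathrm{PGL}_2$ and check that the relevant intersections with $\langle\iota_F\rangle$ are trivial.
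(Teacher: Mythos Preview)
Your proof is correct and follows the same overall architecture as the paper's: restrict to a general fibre $F$, use that $\sigma$ is symplectic to force $g(F/\langle\sigma|_F\rangle)=1$, apply Riemann--Hurwitz to bound $|\sigma|$, and then invoke Corollary~\ref{cor: Z22 acts} via the Klein four-group $\langle\sigma,\tau\rangle$ and the induced fibrations on the two relevant quotient surfaces.

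There are two noteworthy differences. First, your route to $|\sigma|\le 2$ is considerably longer than necessary. The paper observes directly that a cyclic cover of an elliptic curve must have at least two branch points (the local monodromies must sum to zero in the cyclic group), so $\sum_i(1-1/r_i)\ge 2\cdot\tfrac12=1$, and Riemann--Hurwitz $2=|\sigma|\sum_i(1-1/r_i)$ immediately yields $|\sigma|\le 2$; your detour through $|\sigma|\le 4$ followed by a case analysis on the Weierstrass orbits under a rotation of $\PP^1$ is correct but avoidable. Second, and in the other direction, your $\mathrm{GL}_2$ argument at the end of the third paragraph actually supplies a step the paper glosses over: the paper's argument only shows that every nontrivial element of $H_0$ has order two, whereas the stated conclusion is $|H_0|\le 2$. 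Your observation that $H_0\times\langle\tau\rangle$ embeds, via the faithful action of $\Aut(F)$ on $H^0(F,\omega_F)\cong\CC^2$, into an elementary abelian $2$-subgroup of $\mathrm{GL}_2(\CC)$, hence into $\{\pm1\}^2$, cleanly closes that gap.
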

\begin{proof}
Let $F$ be a general fiber of $f$. Then $\Aut_B(S)$ injects into $\Aut(F)$, which is finite. Suppose that $\sigma\in \Aut_B(S)\cap \Aut_s(S)\setminus\{\id_S\}$. Then 
\[
p_g(S)=p_g(S/\langle\sigma\rangle)>0.
\]
This implies that $g(F/\langle\sigma\rangle)=1$, since it is smaller than $2$ but cannot be $0$, due to the positivity of $p_g$. By the Riemann-Hurwitz formula, we have 
	\begin{equation}\label{eq:RH}
		2g(F)-2=|\sigma|\left(2g(F/\langle\sigma\rangle)-2+\sum_i(1-\frac{1}{r_i})\right).
	\end{equation}	
	Since the automorphism group $\langle\sigma\rangle$ generated by $\sigma$ is abelian, the quotient map $F\to F/\langle\sigma\rangle$ has at least two branch points and hence $\sum_i(1-\frac{1}{r_i})\geq 1$. Then the equality (\ref{eq:RH}) implies that $|\sigma|\leq 2$. 
    
Note that the hyperelliptic involution $\tau\in \Aut_B(S)$ commutes with $\sigma$, and they generate a Klein group $G:=\langle \sigma, \tau\rangle\cong(\ZZ/2\ZZ)^2$. Since the induced fibrations $S/\langle\sigma\tau\rangle\rightarrow B$ and $S/\langle \tau\rangle\rightarrow B$ have genus $1$ and $0$ respectively, so both surfaces are not of general type, and they have vanishing $p_g$, we may conclude by Corollary~\ref{cor: Z22 acts} that $\sigma$ acts trivially on $\CH_0(S)_{\alb}$.
\end{proof}

\begin{lem}\label{lem: Aut=Aut_f}
    Let $S$ be a smooth projective surface of general type with a genus two fibration $f\colon S\rightarrow B$. If $\chi(\sO_S)\geq 5$, then $\Aut(S)=\Aut_f(S)$, that is, every automorphism of $S$ preserves the fibration $f$.
\end{lem}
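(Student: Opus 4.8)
The plan is to prove the stronger statement that, under $\chi(\sO_S)\geq 5$, the surface $S$ carries \emph{at most one} genus two fibration; since $\Aut(S)$ permutes the genus two fibrations of $S$, this forces $\Aut(S)=\Aut_f(S)$. We may assume $S$ is minimal. Since $\chi:=\chi(\sO_S)\geq 5$ we have $p_g:=p_g(S)\geq\chi-1\geq 4$, so Theorem~\ref{thm: Xiao canonical map} is available. Arguing by contradiction, suppose $S$ has a second genus two fibration $f'\colon S\to B'$, with general fibre $F'$ distinct from the general fibre $F$ of $f$; then $F^2=F'^2=0$, $K_S\cdot F=K_S\cdot F'=2$, and $d:=F\cdot F'\geq 1$. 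Applying the Hodge index theorem to the $\RR$-span of $F+F'$ and $K_S$ gives $2d\,K_S^2\leq(K_S\cdot(F+F'))^2=16$ (the degenerate case $F+F'\equiv\lambda K_S$ yields $K_S^2=8/d$ directly), so $K_S^2\leq 8$; combined with Noether's inequality $K_S^2\geq 2\chi-6$ this pins down $\chi\in\{5,6,7\}$ and $K_S^2\in\{4,\dots,8\}$.

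Next I would use the canonical map through Theorem~\ref{thm: Xiao canonical map}. If the image of $\phi_{K_S}$ is a curve, then $\phi_{K_S}$ factors through $f$, so $f$ is the Stein factorisation of the (intrinsic) canonical map, hence preserved by $\Aut(S)$ — contradicting the existence of $f'$. So the image is a surface and $\deg\phi_{K_S}\in\{2,4\}$. If $\deg\phi_{K_S}=4$, the moving part of $|K_S|$ has self-intersection at least $4(p_g-2)$ (its image being a non-degenerate surface of degree $\geq p_g-2$ in $\PP^{p_g-1}$), whence $K_S^2\geq 4(p_g-2)\geq 8$; this forces $K_S^2=8$, $p_g=4$, $\chi=5$, $q(S)=0$, and so $g(B)=0$, contradicting the bound $p_g\leq 2g(B)+2=2$ of Theorem~\ref{thm: Xiao canonical map}(ii)(a). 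Hence $\deg\phi_{K_S}=2$.

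Since $\phi_{K_S}$ is now generically two-to-one onto its image, $S$ carries an intrinsic canonical involution $\iota$, namely the deck transformation of the degree-two map from $S$ onto the normalisation of $\phi_{K_S}(S)$; being intrinsic, $\iota$ is centralised by $\Aut(S)$. By Lemma~\ref{lem: factor thru S/G} the map $\phi_{K_S}$ factors through $S/\langle\tau\rangle$, where $\tau$ is the hyperelliptic involution of $f$; comparing degrees identifies $\iota=\tau$ (and shows $\Aut_s(S)$ is trivial). Thus $\tau$ is central in $\Aut(S)$, so for $\sigma$ with $f'=f\circ\sigma^{-1}$ we get $\sigma\tau\sigma^{-1}=\tau$; as $\sigma\tau\sigma^{-1}$ is the hyperelliptic involution of $f'$, the fibrations $f$ and $f'$ share the relative hyperelliptic involution $\tau$, i.e.\ both are pullbacks of conic bundle structures on $R:=S/\langle\tau\rangle$. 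If $g(B)\geq 1$, a minimal resolution of $R$ is ruled over a curve of positive genus, hence has a unique conic bundle structure, so $f=f'$ — a contradiction. Therefore $g(B)=g(B')=0$.

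It remains to rule out two distinct genus two fibrations over $\PP^1$ with the same relative hyperelliptic involution $\tau$, subject to $\chi\in\{5,6,7\}$, $K_S^2\in\{4,\dots,8\}$ and $q(S)\leq 1$ (the last from the nefness of $f_*\omega_{S/\PP^1}$, which has positive degree $\chi+1$ on $\PP^1$). Equivalently: the rational surface $R$ resolving $S/\langle\tau\rangle$ should carry two distinct conic bundle structures, each meeting the reduced branch divisor of $S\to R$ in degree $6$. I expect this finite case analysis to be the main obstacle: the tightest configuration is the double cover of $\PP^1\times\PP^1$ branched over a curve of bidegree $(6,6)$, which realises $\chi(\sO_S)=5$, so it is exactly here that the hypothesis $\chi(\sO_S)\geq 5$ must be pushed to its limit. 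Finally, the isotrivial case is treated in parallel: writing an isotrivially fibred surface as a quotient of a product of curves (as in the proof of Corollary~\ref{cor: isotrivial}) and bounding $\chi$, one checks that $\chi(\sO_S)\geq 5$ leaves room for only one genus two fibration, which completes the argument.
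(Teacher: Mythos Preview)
The paper's proof is a one-line citation to Xiao's monograph (Proposition~6.4 and Th\'eor\`eme~6.5 of \cite{Xiao85}) for the statement that $f$ is the unique genus two fibration on $S$ once $\chi(\sO_S)\geq 5$. Your attempt to reprove this uniqueness from scratch---via the Hodge index bound $K_S^2\leq 8$, Noether's inequality, the dichotomy for the canonical map in Theorem~\ref{thm: Xiao canonical map}, and the reduction to conic bundle structures on $S/\langle\tau\rangle$---is a genuinely different and much more hands-on route, and the first three paragraphs are essentially correct.

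There is, however, a real gap, and it is more serious than the ``finite case analysis'' you defer in the last paragraph. The double cover of $\PP^1\times\PP^1$ branched along a smooth curve of bidegree $(6,6)$, which you yourself single out as the tightest configuration, is an \emph{actual} surface with $\chi(\sO_S)=5$, $K_S^2=4$, $q=0$, and \emph{two} distinct genus two fibrations (one from each ruling, since a ruling meets the branch curve in $6$ points). So the uniqueness statement you are aiming for is simply false at $\chi=5$; no case analysis will eliminate this example. Worse, if the branch curve is chosen invariant under the swap of the two $\PP^1$-factors (a generic symmetric $(6,6)$ curve is smooth), the swap lifts to an involution of $S$ exchanging the two fibrations, so even the weaker conclusion $\Aut(S)=\Aut_f(S)$ is not reachable by your uniqueness-only strategy at $\chi=5$. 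You should go back to Xiao's actual statements: Proposition~6.4 and Th\'eor\`eme~6.5 carry more structure than bare uniqueness (in particular an intrinsic description of the fibration via the canonical map and a classification of the exceptional configurations), and that extra information is what makes the boundary case go through. A smaller point: in your third paragraph you write ``for $\sigma$ with $f'=f\circ\sigma^{-1}$'', but $f'$ was introduced as an arbitrary second fibration, not one arising from an automorphism; the clean fix is to argue directly that the hyperelliptic involutions of $f$ and of $f'$ both equal the canonical involution $\iota$ (since $\deg\phi_{K_S}=2$ and $\phi_{K_S}$ factors through each quotient), hence coincide.
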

\begin{proof}
This is because $f$ is the unique genus two fibration on $S$ by \cite[Proposition~6.4 and Th\'eor\`em~6.5]{Xiao85}, and hence preserved by any automorphism of $S$. 
\end{proof}

\begin{prop}\label{prop: canonical map gen finite}
Let $S$ be a smooth projective surface of general type with a genus two fibration $f\colon S\rightarrow B$, as in Notation~\ref{nota: g=2 fibration}. Suppose that  $\chi(\sO_S)\geq 5$ and the canonical map $\phi_{K_S}$ is generically finite onto its image $T:=\phi_{K_S}(S)$. If $\Aut_s(S)$ is nontrivial, then the following holds.
\begin{enumerate}
\item[(i)] $\Aut_s(S)$ has order 2, and preserves the fibration $f$.
\item[(ii)] $S\dashrightarrow T$ is birationally a $(\ZZ/2\ZZ)^2$-cover, that is, the induced extension $K(S)/K(T)$ of function fields is Galois with Galois group isomorphic to $(\ZZ/2\ZZ)^2$.
\item[(iii)] $b\geq 2$, and the induced action of $\Aut_s(S)$ on $B$ is not trivial.
\item[(iv)] $\Aut_{\sO}(S)$ is trivial.
\item[(v)] if $q_f>0$, then $\Aut_s(S)$ acts trivially on $\CH_0(S)_\alb$.
\end{enumerate}
\end{prop}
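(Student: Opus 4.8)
The plan is to use Xiao's bounds on the canonical map to pin down $\Aut_s(S)$ and the cover $S\dashrightarrow T$, read off (i)--(iv) from the resulting numerical constraints, and then reduce (v) to a vanishing statement for an Albanese kernel via the $(\ZZ/2\ZZ)^2$-symmetry.

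\emph{For (i) and (ii)} I would argue as follows. Since $\chi(\sO_S)\geq 5$, Lemma~\ref{lem: Aut=Aut_f} gives $\Aut(S)=\Aut_f(S)$; this yields the "preserves $f$" part directly, and it also makes the hyperelliptic involution $\tau$ central in $\Aut(S)$ (any $g\in\Aut(S)$ carries a general fiber $F$ isomorphically onto $g(F)$, hence conjugates the unique hyperelliptic involution of $F$ to that of $g(F)$). As $p_g(S)\geq\chi(\sO_S)-1\geq 4>0$, we have $\tau\notin\Aut_s(S)$, so $G:=\langle\Aut_s(S),\tau\rangle$ has order $\geq 4$. By Lemma~\ref{lem: factor thru S/G}, $\phi_{K_S}=\varphi\circ\pi$ with $\pi\colon S\to S/G$; since $\phi_{K_S}$ is generically finite onto the surface $T$, so is $\varphi$, hence $|G|$ divides $\deg\phi_{K_S}$. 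By Theorem~\ref{thm: Xiao canonical map}(ii), $\deg\phi_{K_S}\in\{2,4\}$, so $|G|=4=\deg\phi_{K_S}$ and $\varphi$ is birational. A group of order $4$ generated by the central involution $\tau$ together with $\Aut_s(S)$, with $\tau\notin\Aut_s(S)$, cannot be cyclic, so $G\cong(\ZZ/2\ZZ)^2$ and $|\Aut_s(S)|=2$; write $\Aut_s(S)=\langle\sigma\rangle$. This gives (i); and birationality of $\varphi$ gives $K(T)=K(S)^G$ with $K(S)/K(S)^G$ Galois of group $G\cong(\ZZ/2\ZZ)^2$, which is (ii).

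\emph{For (iii) and (iv):} from $\deg\phi_{K_S}=4$, Theorem~\ref{thm: Xiao canonical map}(ii) also gives $p_g(S)\leq 2b+2$ and $q(T)\leq 1$. Since $p_g(S)=\chi(\sO_S)-1+q(S)\geq 4$ one gets $b\geq 1$; and $b=1$ would force $p_g(S)=4$, hence $q(S)=5-\chi(\sO_S)\leq 0$, contradicting $q(S)\geq g(B)=1$. So $b\geq 2$. If $\sigma$ acted trivially on $B$, then (as $\tau\in\Aut_B(S)$) all of $G$ would, so $f$ would descend to a fibration $S/G\to B$ with general fiber a genus two curve modulo a group containing its hyperelliptic involution, i.e.\ $\PP^1$; then $q(T)=q(S/G)=b\geq 2$, absurd. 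This proves (iii). For (iv), $\Aut_\sO(S)\subseteq\Aut_s(S)=\langle\sigma\rangle$, and if $\sigma\in\Aut_\sO(S)$ then $\sigma$ acts trivially on $H^1(S,\sO_S)\cong\overline{H^0(S,\Omega^1_S)}$, hence on $f^*H^0(B,\Omega^1_B)$, so $\sigma_B$ acts trivially on $H^0(B,\Omega^1_B)$; since $b\geq 2$ this forces $\sigma_B=\id_B$ by faithfulness, contradicting (iii). So $\Aut_\sO(S)$ is trivial.

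\emph{For (v)} I would apply Lemma~\ref{lem: Z22 acts} to $G=\langle\sigma,\tau\rangle$: $S/\langle\tau\rangle$ carries the induced $\PP^1$-fibration over $B$, hence is birationally ruled, so $\CH_0(S/\langle\tau\rangle)_\alb=0$ by \cite{BKL76}, and then $\sigma$ acts trivially on $\CH_0(S)_\alb$ if and only if $\CH_0(S/\langle\sigma\tau\rangle)_\alb=0$. Since $\sigma\tau$ acts as $-1$ on $H^0(S,\omega_S)$, $p_g(S/\langle\sigma\tau\rangle)=0$, so by \cite{BKL76} it suffices to show that a smooth model of $S/\langle\sigma\tau\rangle$ is not of general type (a minimal surface of general type has $\chi(\sO)\geq 1$). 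If $f$ is isotrivial, $\sigma$ acts trivially on $\CH_0(S)_\alb$ already by Corollary~\ref{cor: isotrivial}; so assume $f$ is not isotrivial, hence $q_f<g=2$, i.e.\ $q_f=1$. Now decompose $V:=H^0(S,\Omega^1_S)$ into the four $(\sigma,\tau)$-eigenspaces $V_{\epsilon\delta}$. Since $\tau$ fixes $f^*H^0(B,\Omega^1_B)$ and $q(S/\langle\tau\rangle)=b$, one has $V_{++}\oplus V_{-+}=V^{\langle\tau\rangle}=f^*H^0(B,\Omega^1_B)$, so $\sigma$ acts on it through $\sigma_B$ and $\dim V_{++}=g(B')=q(T)\in\{0,1\}$ with $B':=B/\langle\sigma_B\rangle$; moreover $\dim V_{+-}+\dim V_{--}=q(S)-b=q_f=1$ and $q(S/\langle\sigma\tau\rangle)=\dim(V_{++}\oplus V_{--})$, so $\chi(\sO_{S/\langle\sigma\tau\rangle})=1-\dim V_{++}-\dim V_{--}$. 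Hence if $\dim V_{--}=1$ or $\dim V_{++}=1$ then $\chi(\sO_{S/\langle\sigma\tau\rangle})\leq 0$, so $S/\langle\sigma\tau\rangle$ is not of general type and we are done.

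\emph{The main obstacle} is the remaining case $\dim V_{+-}=q_f=1$, $\dim V_{--}=0$, $\dim V_{++}=g(B')=0$. Here $\sigma_B$ acts as $-1$ on $H^0(B,\Omega^1_B)$, the elliptic curve $K_0:=\ker(\Alb(S)\to\Jac(B))^\circ$ carries $\tau|_{K_0}=-1$ and $\sigma|_{K_0}=\id$, and $\sigma\tau$ acts as $-1$ on all of $H^1(S,\CC)$. I expect this to be settled by combining the genus two fibration $f\colon S\to B$ with the Albanese-induced morphism $S\to K_0$ — which together give a dominant, generically finite, $G$-equivariant map $S\to B\times K_0$ — with Xiao's structure theory for genus two fibrations possessing a fixed elliptic factor in their relative Jacobian, to conclude that, in the presence of the extra involution $\sigma$, such an $S$ is after all isotrivially fibered (so that Corollary~\ref{cor: isotrivial} applies) or that $S/\langle\sigma\tau\rangle$ has Kodaira dimension at most one. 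Establishing this last dichotomy is the technical heart of the argument; everything else above is formal.
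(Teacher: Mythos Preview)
Your arguments for (i)--(iv) are correct and essentially coincide with the paper's; the only cosmetic difference is in (iii), where the paper compares $\dim H^0(S,\Omega_S^1)^G\leq q(T)\leq 1$ with $\dim H^0(S,\Omega_S^1)^\tau=b\geq 2$ directly rather than phrasing it via the descended fibration $S/G\to B$.

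The genuine gap is in (v): your ``remaining case'' $\dim V_{+-}=1$, $\dim V_{--}=0$, $\dim V_{++}=0$ in fact \emph{never occurs}, and the speculative structure-theoretic plan you sketch is not needed. The missing idea is the wedge-product constraint coming from the symplectic condition. Choose $0\neq\eta_0\in H^0(S,\Omega_S^1)\setminus f^*H^0(B,\Omega_B^1)$, a $\sigma$-eigenvector with eigenvalue $\lambda\in\{\pm 1\}$. The linear map
\[
\wedge\,\eta_0\colon H^0(B,\Omega_B^1)\longrightarrow H^0(S,K_S),\qquad \eta\longmapsto f^*\eta\wedge\eta_0,
\]
is injective and $G$-equivariant. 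Since $\sigma^*$ is the identity on $H^0(S,K_S)$, one reads off $\lambda\cdot\sigma_B^*\eta=\eta$ for every $\eta$, i.e.\ $\sigma_B^*$ acts on $H^0(B,\Omega_B^1)$ as the scalar $\lambda^{-1}$. Now $\sigma_B\neq\id_B$ by (iii) and $b\geq 2$, so $\phi_{K_B}$ factors nontrivially through $B\to B/\langle\sigma_B\rangle$; hence $B$ is hyperelliptic, $\sigma_B$ is its hyperelliptic involution, and $\lambda^{-1}=-1$. Thus $\lambda=-1$, meaning $\eta_0\in V_{--}$, so $\dim V_{--}=1$ always, which is a case you have already dispatched. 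Equivalently: in your ``remaining case'' one would have $\sigma^*(f^*\eta\wedge\eta_0)=(-1)(+1)\,f^*\eta\wedge\eta_0=-f^*\eta\wedge\eta_0$ for any $0\neq\eta\in H^0(B,\Omega_B^1)$, contradicting that $\sigma$ is symplectic. This short cup-product observation replaces the entire ``main obstacle'' paragraph.
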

\begin{proof}
By Lemma~\ref{lem: factor thru S/G}, $\phi_{K_S}$ factors through the quotient map $S\rightarrow S/G$, where $G\subset \Aut(S)$ is the subgroup generated by $\Aut_s(S)$ and the hyperelliptic involution $\tau$, which does not lie in $\Aut_s(S)$. Therefore, 
\[
2|\Aut_s(S)| \leq |G|\leq \deg \phi_{K_S}\in \{2,4\}
\]
where we used the bound on $\deg \phi_{K_S}$ from Theorem~\ref{thm: Xiao canonical map} (ii).
Since $\Aut_s(S)$ is nontrivial by assumption, we infer that
\[
\deg\phi_{K_S} = 4, \quad |\Aut_s(S)|=2, \quad G\cong (\ZZ/2\ZZ)^2.
\]
By Lemma~\ref{lem: Aut=Aut_f}, $\Aut_s(S)$ preserves the fibration $f$. Thus (i) is proved.

\medskip

Note that $\phi_{K_S}$ has degree 4 and factors through the quotient map $S\rightarrow S/\langle \tau, \sigma\rangle$, which also has degree $4$. Thus the induced map $S/\langle \tau, \sigma\rangle\dashrightarrow T$ is birational, and (ii) follows.

\medskip

By Theorem~\ref{thm: Xiao canonical map} (ii), we have
\begin{equation}\label{eq: pg leq 2b+2}
    p_g \leq 2b+2.
\end{equation}
It follows that
\[
b\geq p_g -b-2\geq p_g-q-2= \chi -3 \geq 2
\]
where the last inequality is because $\chi\geq 5$ by assumption. 

Now $T$ is either a rational surface or a cone over an elliptic curve by Theorem~\ref{thm: Xiao canonical map} (ii). This implies that the smooth models of $T$ (and of $S/G$) have irregularity at most 1. Thus, the dimension of $H^0(S,\Omega_S)^G$ is at most 1. On the other hand, since $S/\langle\tau\rangle$ is a $\PP^1$-fibration over $B$, we have 
\[
\dim H^0(S,\Omega_S^1)^\tau = b\geq 2.
\]
It follows that $H^0(S,\Omega_S)^G\subsetneq H^0(S,\Omega_S^1)^\tau$, and hence $\sigma$ does not act trivially on $H^0(S, \Omega_S^1)\cong H^1(S, \sO_S)^{\vee}$. Therefore, the induced automorphism $\sigma_B\in \Aut(B)$ of $\sigma$ is not the identity. This finishes the proof of (iii).

\medskip

(iv) we have $\sigma\notin\Aut_\sO(S)$ by (iii), and hence $\Aut_{\sO}(S)$ is trivial. 

\medskip

(v) Note that $q_f\leq 2$ in any case. If $q_f =2$, then the assertion follows directly from Corollary~\ref{cor: q_f max}. 

Thus we may assume that $q_f=1$, and there is a one-form $\eta_0\in H^0(S, \Omega_S^1)\setminus f^*H^0(B, \Omega_B)$, which we may assume to be an eigenvector with eigenvalue $\lambda$ under the action of $\sigma$. Then 
\[
H^0(S,\Omega_S^1)=f^*H^0(\Omega_B^1)\oplus\CC\cdot\eta_0,
\]
both summands of which are invariant under the action of $G=\langle \sigma, \tau\rangle$. Define a $\CC$-linear map
\begin{equation}
\wedge\eta_0\colon H^0(B, \Omega_B^1) \rightarrow H^0(S, K_S),\quad \eta\mapsto f^*\eta\wedge\eta_0
\end{equation}
which is easily seen to be injective and $G$-equivariant. Since $\sigma$ acts trivially on $H^0(S, K_S)$ and hence also trivially on the subspace $f^*H^0(B, \Omega_B^1))\wedge \eta_0$. It follows that $\sigma$ acts as the scalar multiplication by  $\lambda^{-1}$ on the whole $H^0(B,\Omega^1_B)$. Thus the canonical map $\phi_{K_B}\colon B\to\PP^{b-1}$ factors through the quotient map $B\to B/\langle\sigma_B\rangle$. Since $b\geq 2$, we infer that $B$ is a hyperelliptic curve and $\sigma_B$ is its hyperelliptic involution, acting as $-1$ on $H^0(B, \Omega_B^1)$.  It follows that $\lambda =-1$, and the eigenvalues of the action are as in the following table
\[
\begin{array}{c|c|c}
 & f^*H^0(\Omega_B^1) & \CC\cdot\omega \\
\hline
\sigma & -1  & -1 \\
\hline
\tau & 1 & -1
\end{array}
\]
It follows that 
\[
p_g(S/\langle\sigma\tau\rangle)=0,\quad  q(S/\langle\sigma\tau\rangle)=1
\]
and hence $S/\langle\sigma\tau\rangle$ is not of general type (\cite[Chapter VI]{Bea96}). By Corollary~\ref{cor: Z22 acts}, $\sigma$ acts trivially on $\CH_0(S)_{\alb}$.
\end{proof}

Finally, we make an observation that Conjecture~\ref{conj: pg=0} (for surfaces with $p_g=0$) implies Conjecture~\ref{conj: generalize Bloch} for surfaces of general type with a genus two fibration and $\chi(\sO_S)\geq 5$.
\begin{prop}\label{prop: assume conj}
    Let $S$ be a smooth projective surface of general type with a genus two fibration and $\chi(\sO_S)\geq 5$. Assume that Conjecture~\ref{conj: pg=0} holds. Then $\Aut_s(S)$ acts trivially on $\CH_0(S)_{\alb}$.
\end{prop}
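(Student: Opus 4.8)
The plan is to reduce everything to the results already established for surfaces of general type with a genus two fibration, together with Conjecture~\ref{conj: pg=0}. If $\Aut_s(S)$ is trivial there is nothing to prove, so assume it is nontrivial. Since $\chi(\sO_S)\geq 5$, we have $p_g(S)=\chi(\sO_S)-1+q(S)\geq 4$, so the canonical map $\phi_{K_S}\colon S\dashrightarrow\PP^{p_g-1}$ is defined and, $S$ being of general type with $p_g\geq 2$, its image is a curve or a surface. I would treat these two cases separately.

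If the image of $\phi_{K_S}$ is a curve, then $p_g\geq 3$ and Proposition~\ref{prop: canonical map fibration} applies directly: it asserts, unconditionally, that $\Aut_s(S)$ acts trivially on $\CH_0(S)_{\alb}$. If the image is a surface, i.e.\ $\phi_{K_S}$ is generically finite onto its image, then Proposition~\ref{prop: canonical map gen finite} gives $|\Aut_s(S)|=2$; write $\Aut_s(S)=\langle\sigma\rangle$ and let $\tau$ be the hyperelliptic involution of $f$, so that $G:=\langle\sigma,\tau\rangle\cong(\ZZ/2\ZZ)^2$. Part (v) of that proposition already settles the subcase $q_f>0$ unconditionally, so the only remaining case to handle is that $\phi_{K_S}$ is generically finite and $q_f=0$ --- and here one is allowed to use Conjecture~\ref{conj: pg=0}.

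For this last case I would apply Lemma~\ref{lem: Z22 acts} to the pair $(\sigma,\tau)$. Its hypothesis $\CH_0(S/\langle\tau\rangle)_{\alb}=0$ holds because $S/\langle\tau\rangle$ is birational to a $\PP^1$-fibration over $B$ (the hyperelliptic involution contracts each genus two fibre to a $\PP^1$), hence to a ruled surface, which has $p_g=0$, is not of general type, and therefore has vanishing Albanese kernel by \cite{BKL76}. It then remains to show $\CH_0(S/\langle\sigma\tau\rangle)_{\alb}=0$, and this is where Conjecture~\ref{conj: pg=0} enters: since $\sigma\in\Aut_s(S)$ acts trivially on $H^0(S,K_S)$ while $\tau$ acts as $-1$ there (as recorded in Notation~\ref{nota: g=2 fibration}), the involution $\sigma\tau$ acts as $-1$ on $H^0(S,K_S)$, so $H^0(S,K_S)^{\langle\sigma\tau\rangle}=0$; as quotient singularities are rational, a smooth model $Y$ of $S/\langle\sigma\tau\rangle$ has $p_g(Y)=0$, and Conjecture~\ref{conj: pg=0} gives $\CH_0(Y)_{\alb}=\CH_0(S/\langle\sigma\tau\rangle)_{\alb}=0$. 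Lemma~\ref{lem: Z22 acts} then yields that $\sigma$, and hence all of $\Aut_s(S)=\langle\sigma\rangle$, acts trivially on $\CH_0(S)_{\alb}$.

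I do not expect a genuine obstacle: the proposition is essentially an assembly of Propositions~\ref{prop: canonical map fibration} and \ref{prop: canonical map gen finite} with the observation that the only case they leave open (canonical map generically finite, $q_f=0$) is exactly the one in which the extra input of Conjecture~\ref{conj: pg=0}, applied to $S/\langle\sigma\tau\rangle$, is available. The one point deserving a line of care is the identity $p_g(S/\langle\sigma\tau\rangle)=0$, which rests on the eigenvalue bookkeeping above together with the rationality of the singularities of $S/\langle\sigma\tau\rangle$.
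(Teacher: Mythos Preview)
Your proposal is correct and follows essentially the same approach as the paper: reduce via Propositions~\ref{prop: canonical map fibration} and \ref{prop: canonical map gen finite} to the case where $\phi_{K_S}$ is generically finite and $\Aut_s(S)=\langle\sigma\rangle$ has order two, then apply Lemma~\ref{lem: Z22 acts} using that $S/\langle\tau\rangle$ is a $\PP^1$-fibration and that $p_g(S/\langle\sigma\tau\rangle)=0$, so Conjecture~\ref{conj: pg=0} forces $\CH_0(S/\langle\sigma\tau\rangle)_{\alb}=0$. The only (harmless) difference is that you separate out the subcase $q_f>0$ as already settled unconditionally by Proposition~\ref{prop: canonical map gen finite}(v), whereas the paper simply invokes Conjecture~\ref{conj: pg=0} uniformly in the generically finite case.
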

\begin{proof}
We may assume that $\Aut_s(S)$ is nontrivial. By Theorem~\ref{thm: main}, we may assume that the canonical $\phi_{K_S}$ is generically finite, and $\Aut_s(S)$ has order two. Let $\sigma$ be the generator of $\Aut_s(S)$, and $\tau\in \Aut_B(S)$ the hyperelliptic involution. Then $\langle\sigma,\tau\rangle\cong (\ZZ/2\ZZ)^2$, and since $S/\langle\tau\rangle\rightarrow B$ is a $\PP^1$-fibration, one has $\CH_0(S/\langle\tau\rangle)_\alb=0$. Note also that $p_g(S/\langle \sigma\tau\rangle)=0$. 

Assuming Conjecture~\ref{conj: pg=0} holds, we have 
$\CH_0(S/\langle\sigma\tau\rangle)_{\alb} = 0$, and hence $\sigma$ acts trivially on $\CH_0(S)_{\alb}$ by Lemma~\ref{lem: Z22 acts}.
\end{proof}


\begin{thebibliography}{99}

	%\bibitem[Bar85]{Bar85} Rebecca Barlow, Rational equivalence of zero cycles for some more surfaces with $p_g=0$,  Invent. Math.  79 (1985), 303--308.

    %\bibitem[Bau14]{Bau14} Ingrid Bauer,  Bloch's conjecture for Inoue surfaces with $p_g=0,\, K^2=7$.  Proc. Amer. Math. Soc.  142 (2014), no. 10, 3335--3342.

    %\bibitem[BC13]{BC13} Ingrid Bauer and Fabrizio Catanese, Burniat-type surfaces and a new family of surfaces with $p_g=0,\, K^2=3$. Rend. Circ. Mat. Palermo (2)  62 (2013), no. 1, 37--60.

    %\bibitem[BF15]{BF15} Ingrid Bauer and Davide Frapporti, Bloch's conjecture for generalized Burniat type surfaces with $p_g=0$. Rend. Circ. Mat. Palermo (2)  64 (2015), no. 1, 27--42. 
	
	\bibitem[Bea82]{Bea82} Arnaud Beauville, L'in\'egalit\'e $p_g\geqslant 2q-4$ pour les surfaces de type g\'en\'eral, Appendice a Olivier Debarre:``In\'egalit\'es  num\'eriques pour les surfaces de type g\'en\'eral",  Bull. Soc. Math. France  110 (1982), no.~3, 319--346.

    \bibitem[Bea96]{Bea96} Arnaud Beauville, Complex algebraic surfaces, London Mathematical Society Student Texts, 34, Cambridge Univ. Press, Cambridge, 1996.

    \bibitem[Blo75]{Blo75} Spencer Bloch, $K_2$ of artinian $\QQ$-algebras, with application to algebraic cycles.  Communications in Algebra, 3 (5), 405--428. https://doi.org/10.1080/00927877508822053
	
	%\bibitem[Blo10]{Blo10} Spencer Bloch,  Lectures on Algebraic Cycles, Cambridge University Press, Second Edition, 2010.
	
	%\bibitem[BHPV04]{BHPV04} Wolf P. Barth, Klaus Hulek, Chris A. M. Peters and Antonius Van de Ven,  Compact Complex Surfaces, Springer-Verlag Berlin Heidelberg New York, Second Enlarged Edition, 2004.
	
	%\bibitem[BHT11]{BHT11} Fedor  Bogomolov, Brendan Hassett and Yuri Tschinkel, Constructing rational curves on $K3$ surfaces, Duke Mathematical Journal, Vol. 157, No. 3, 535--550, 2011. 
	
	\bibitem[BKL76]{BKL76} Spencer Bloch, Arnold Kas and David Lieberman, Zero cycles on surfaces with $p_g=0$, Compositio Mathematica  33.2 (1976), 135--145.
	
	\bibitem[Cai06a]{Cai06a} Jin-Xing Cai, Automorphisms of fiber surfaces of genus $2$, inducing the identity  in cohomology, Trans. of the Amer. Math. Soc.  358.3 (2005), 1187--1201.
    \bibitem[Cai06b]{Cai06b} Jin-Xing Cai, Automorphisms of fiber surfaces of genus 2, inducing the identity in cohomology. II, Internat. J. Math. 17:2 (2006), 183--193.
	
	\bibitem[Cai07]{Cai07} Jin-Xing Cai, Classification of fiber surfaces of genus $2$ with automorphisms acting trivially in cohomology, Pacific Journal of Mathematics  232.1 (2007), 43--59.

    %\bibitem[Che13]{Che13} Yifan Chen,  A new family of surfaces of general type with $K^2=7$ and $p_g=0$.  Math. Z.  275 (2013), no. 3--4, 1275--1286.
    
    %\bibitem[CL18]{CL18} Jin-Xing Cai and Wenfei Liu, Automorphisms of surfaces of general type with $q=1$ acting trivially in cohomology,  Annali della Scuola Normale Superiore di Pisa, Classe di Scienze Serie V, Vol.  XVIII, FAsc. 4 (2018).
	
	\bibitem[DL23]{DL23} Jiabin Du and Wenfei Liu, On symplectic automorphisms of elliptic surfaces acting on $\CH_0$,  Sci. China Math.  66 (2023), no. 3, 443--456. 
	
	%\bibitem[Ful]{Ful} William Fulton, Intersection theory, second edition,  Ergebnisse der Mathematik und ihrer Grenzgebiete. 3. Folge. A Series of Modern Surveys in Mathematics, 2, Springer, Berlin, 1998.
   
	%\bibitem[Gar13]{Gar13} Alice Garbagnati, Elliptic $K3$ Surfaces with Abelian and Dihedral Groups of Symplectic Automorphisms, Communications in Algebra,  41: 583--616, 2013.
	
	%\bibitem[GP02]{GP02} Vladimir Guletskii and Claudio Pedrini, The Chow motive of the Godeaux surface. Algebraic Geometry, 179--195, de Gruyter, Berlin, 2002.
	
	%\bibitem[Hor05]{Hor75} Eiji Horikawa, On Algebraic Surfaces with Pencils of Curves of Genus $2$, Complex Analysis and Algebraic Geometry, (1977), 79--90.
	
	\bibitem[Huy12]{Huy12} Daniel Huybrechts, Symplectic automorphisms of $K3$ surfaces of arbitrary finite order,  Math. Res. Lett.  19 (2012), no.4, 947--951.
	
	%\bibitem[Huy]{Huy} Daniel Huybrechts, Lectures on K3 surfaces, Cambridge Studies in Advanced Mathematics, 158, Cambridge Univ. Press, Cambridge, 2016.
	
	%\bibitem[IM79]{IM79} Hiroshi Inose and M. Mizukami, Rational equivalence of $0$-cycles on some surfaces of general type with $p_g=0$,  Math. Ann.  244 (1979), 205-217.
	
	%\bibitem[Jan94]{Jan94} Uwe Jannsen, Motivic sheaves and filtrations on Chow groups, in  Motives, Proc. Symp. Pure Math. 55.1 (1994), 245--302.

    \bibitem[JLZ23]{JLZ23} Zhi Jiang, Wenfei Liu and Hang Zhao, On numerically trivial automorphisms of threefolds of general type.  Math. Res. Lett.  30 (2023), no. 6, 1751--1785.
	
	\bibitem[Kim05]{Kim05} Shun-ichi Kimura, Chow groups are finite dimensional, in some sense,  Math. Ann.  331 (2005), 173--201.
	
	%\bibitem[KMP07]{KMP07}  Bruno Kahn, Jacob P.  Murre and Claudio Pedrini, On the transcendental part of the motive of a surface. Algebraic cycles and motives. Vol. 2, 143--202, London Math. Soc. Lecture Note Ser., 344, Cambridge Univ. Press, Cambridge, 2007. 

    \bibitem[Lat21]{Lat21}  Robert Laterveer, Bloch's conjecture for some numerical Campedelli surfaces,  Asian Journal of Mathematics, Vol. 25, No. 1, pp. 049--064, 2021.

    \bibitem[LYZ23]{LYZ23} Zhiyuan Li, Xun Yu and Ruxuan Zhang, Bloch's conjecture for (anti-)autoequivalences on K3 surfaces, arXiv:2305.10078v2, 2023.
 
	\bibitem[Mum69]{Mum69} David Mumford, Rational equivalence of $0$-cycles on surfaces, J. Math. Kyoto Univ.  9 (1969), 195--204.
	
	%\bibitem[Muk88]{Muk88} Shigeru Mukai, Finite groups of automorphisms of $K3$ surfaces and the Mathieu group, Invent. Math.  94 (1988), 183--221.
	
	%\bibitem[Mur90]{Mur90} Jacob P. Murre, On the motive of an algebraic surface, Journal f. die reine u. angew. Math. 409, 190--204 (1990).
	
	%\bibitem[MNP]{MNP} Jacob P. Murre, Jan Nagel and Chris A. M. Peters,  Lectures on the Theory of Pure Motives, Ammerican Mathematical Society, Vol.61.
	
	%\bibitem[Nik80]{Nik80} Vyacheslav V. Nikulin, Finite automorphism groups of K$\ddot{a}$hler $K3$ surfaces, Trudy Moskov. Mat. Obshch.,  38 (1979), 75--137.
	
	\bibitem[Paw19]{Paw19} Rakesh R. Pawar, Action of correspondences on filtrations on cohomology and $0$-cycles of Abelian varieties,  Mathematische Zeitschrift  292: 655--675, 2019.

    \bibitem[Roj80]{Roj80} A. A. Rojtman, The torsion of the group of $0$-cycles modulo rational equivalence. Ann. of Math.  (2) 111 (1980), no. 3, 553--569. 

	%\bibitem[PS21]{PS21} Yuri G. Prokhorov and Constantin A. Shramov, Bounded automorphism groups of compact complex surfaces,  Mat. Sb.   211 (2020), no. 9, 105--118.
	
	
	%\bibitem[PW16]{PW16} Claudio Pedrini and Charles Weibel, Some surfaces of general type for which Bloch's conjecture holds, Recent advances in Hodge theory, 308--329, London Math. Soc. Lecture Note Ser., 427, Cambridge Univ. Press, Cambridge, 2016.
	
	%\bibitem[Ser91]{Ser91} Fernando  Serrano, The Picard group of a quasi-bundle, manuscripta mathematica. 73, 63--82, 1991.
	
	%\bibitem[Ser96]{Ser96} Fernando Serrano, Isotrivial fibered Surfaces,  Annali di Mathematica pura ed applicata. (IV),  Vol. CLXXI (1996), pp.63--81.
	
	%\bibitem[Shi00]{Shi00} Ichiro Shimada, On elliptic $K3$ surfaces,  Michigan Math. J.  47 (2000), no. 3, 423--446.
	
	%\bibitem[Sil]{Sil} Joseph H. Silverman,  The Arithmetic of Elliptic Curves, Graduate Texts in Mathematics 106, Springer-Verlag, New York, 1986.
	
	%\bibitem[SS71]{SS71} Ilya Shapiro-Piateski and Igor Rostislavovich Safarevich, A Torelli theorem for algebraic surfaces of type $K3$,  Izv. Akad. Nauk  35, 530--572, 1971.
	
	%\bibitem[Voi14a]{Voi14a} Claire Voisin, Bloch's conjecture for Catanese and Barlow surfaces,  J. Differential Geometry  97 (2014), 149--175.
	
	%\bibitem[Voi14b]{Voi14b} Claire Voisin, Chow Rings, Decomposition of the Diagonal, and the Topology of Families, Annals of Mathematics Studies Number 187, Princeton University Press, 2014.
	
	\bibitem[Voi03]{Voi03} Claire Voisin,  Hodge Theory and Complex Algebraic Geometry II, Cambridge studies in advanced mathematics 77, Cambridge University Press, 2003.

    \bibitem[Voi12]{Voi12} Claire Voisin, Symplectic involutions of $K3$ surfaces act trivially on $\CH_0$,  Documenta Mathematica  17 (2012), 851--860.

    %\bibitem[Voi94]{Voi94} Claire Voisin, Remarks on zero-cycles of self-products of varieties. Moduli of vector bundles (Sanda, 1994; Kyoto, 1994), 265--285, Lecture Notes in Pure and Appl. Math., 179, Dekker, New York, 1996.
	
	\bibitem[Xiao85]{Xiao85} Gang Xiao, Surfaces fiberes en courbes de genre deux,  Lecture Notes in Mathematics 1137, Springer-Verlag Berlin Heidelberg, 1985.
 
	%\bibitem[Xiao85b]{Xiao85b} Gang Xiao,  L'irr\'egularit\'e des surfaces de type g\'en\'eral dont le syst\`eme canonique est compos\'e d'un pinceau. Compositio Mathematica, 56(2), 251--257.
 
	%\bibitem[Sei95]{Sei95} Wolfgang K. Seiler, Moduli of surfaces of general type with a fibration by genus two curves,  Math. Ann.  301, 771--812  (1995).
	
\end{thebibliography}
\end{document}